\newcommand\car[3]{
\draw[rounded corners=3pt,ultra thick,shift={#1},fill=#3] (0,0)--(2,0)--(2,0.5)--(1.5,0.65)--(1.25,1)--(.25,1)--cycle;
\draw[line width=2.5pt,color=black!60!white,fill=white,shift={#1}] (0.5,0) circle (0.2) (1.5,0) circle (0.2);
\node[shift={#1}] at (1,0.5) {\huge $\textbf#2$};
} 
\def\CARPIC{
\begin{tikzpicture}[scale=0.4,transform shape]

\foreach \y in {0,-3.5,...,-17.5}{
    \draw[line width=1.5pt, shift={(0,\y)}] (11.75,-0.25)--(11.75,1.5);
    \draw[line width=1pt, fill=white, shift={(0,\y)}] (11.25,1)--(12,1)--(12,0.9)--(12.35,1.15)--(12,1.4)--(12,1.3)--(11.25,1.3)--cycle;
    \draw[ultra thick, shift={(0,\y)}]
    (23.75 ,-.1)--(23.75,-0.25)--(12.5,-0.25)--(12.5,-.1)  (14.75,-0.25)--(14.75,-.1)(17,-0.25)--(17,-.1) (19.25,-0.25)--(19.25,-.1) (21.5,-0.25)--(21.5,-.1);
    \node[below, shift={(0,\y)}] at (13.625,-0.375) {\huge $\textbf1$};
    \node[below, shift={(0,\y)}] at (15.875,-0.375) {\huge $\textbf2$};
    \node[below, shift={(0,\y)}] at (18.125,-0.375) {\huge $\textbf3$};
    \node[below, shift={(0,\y)}] at (20.375,-0.375) {\huge $\textbf4$};
    \node[below, shift={(0,\y)}] at (22.625,-0.375) {\huge $\textbf5$};
} 

\car{(0,0)}1{magenta!15!white}
\car{(2.25,0)}3{yellow!15!white}
\car{(4.5,0)}2{blue!15!white}
\car{(6.75,0)}4{green!15!white}
\car{(9,0)}2{cyan!15!white}

\car{(2.25,-3.5)}1{magenta!15!white}
\car{(4.5,-3.5)}3{yellow!15!white}
\car{(6.75,-3.5)}2{blue!15!white}
\car{(9,-3.5)}4{green!15!white}
\car{(14.75,-3.5)}2{cyan!15!white}

\car{(4.5,-7)}1{magenta!15!white}
\car{(6.75,-7)}3{yellow!15!white}
\car{(9,-7)}2{blue!15!white}
\car{(19.25,-7)}4{green!15!white}
\car{(14.75,-7)}2{cyan!15!white}

\car{(6.75,-10.5)}1{magenta!15!white}
\car{(9,-10.5)}3{yellow!15!white}
\car{(17,-10.5)}2{blue!15!white}
\car{(19.25,-10.5)}4{green!15!white}
\car{(14.75,-10.5)}2{cyan!15!white}

\car{(9,-14)}1{magenta!15!white}
\car{(21.5,-14)}3{yellow!15!white}
\car{(17,-14)}2{blue!15!white}
\car{(19.25,-14)}4{green!15!white}
\car{(14.75,-14)}2{cyan!15!white}

\car{(12.5,-17.5)}1{magenta!15!white}
\car{(21.5,-17.5)}3{yellow!15!white}
\car{(17,-17.5)}2{blue!15!white}
\car{(19.25,-17.5)}4{green!15!white}
\car{(14.75,-17.5)}2{cyan!15!white}


\end{tikzpicture}
}
\newtheorem{theorem}{Theorem}[section]
\newtheorem{lemma}{Lemma}[section]
\newtheorem{proposition}{Proposition}[section]
\newtheorem{corollary}{Corollary}[section]
\theoremstyle{definition}
\newtheorem{definition}{Definition}[section]
\newtheorem{question}{Question}[section]
\newtheorem{conjecture}{Conjecture}[section]
\newtheorem{observation}{Observation}[section]
\newtheorem{remark}{Remark}[section]
\begin{document}

\title{Lucky cars and lucky spots in parking functions}

\author{Steve Butler\footnote{Department of Mathematics, Iowa State University, Ames, IA 50011 USA\newline \texttt{\{butler,kph3,vlenius,pmart29,mgmoats\}@iastate.edu}} \and Kimberly Hadaway\footnotemark[1] \and Victoria Lenius\footnotemark[1] \and Preston Martens\footnotemark[1] \and Marshall Moats\footnotemark[1]}

\date{\empty}

\maketitle

\begin{abstract}
Parking functions correspond with preferences of $n$ cars which enter sequentially to park on a one-way street where (1) each car parks in the first available spot greater than or equal to its preference and (2) all cars successfully park. When a car parks in its preferred spot then the corresponding car and corresponding spot are deemed ``lucky.'' This paper looks briefly at lucky cars which have previously been studied and in simple cases can be understood by a generalization of a result due to Pollak. We also consider lucky spots where the situation is more complex and not previously studied. Probabilities and asymptotics for lucky spots are given for the first few spots on the one-way street. We close with an exploration of the special cases when cars enter the one-way street in either weakly-increasing or weakly-decreasing order of their preferences.
\end{abstract}

\section{Introduction}
Consider a one-way street with $n$ available parking spots labeled $1,\ldots,n$ (where $1$ is at the start and $n$ is at the end). There are $n$ cars which will sequentially enter to park on this street; each car with its own preference on where to park. As each car enters, it first goes to its preferred spot. If this spot is available, it parks; otherwise, the car moves forwards and parks in the first available spot it finds (if any). If the car reaches the end of the street and fails to park, then it will drive away. An example of this process unfolding is shown in Figure~\ref{fig:parkingexample} where the preferences of the cars (in order) to park are $(2,4,2,3,1)$.

\begin{figure}[htb!]
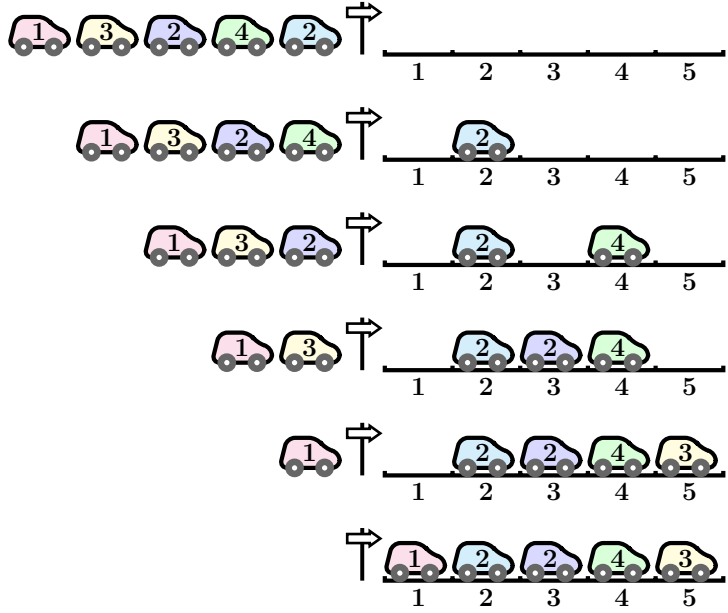

\centering
\CARPIC
\caption{An example of the parking process with preferences $(2,4,2,3,1)$.}
\label{fig:parkingexample}
\end{figure}

\begin{definition}
A \emph{parking function} of $1,\ldots, n$ is a set of preferences for the $n$ cars such that every car is able to park in a one-way street with $n$ labeled spots given that each car parks in the first available spot coming on or after its preference. 
\end{definition}

Parking functions are well studied objects with connections to a variety of topics in enumerative combinatorics (see the survey by Yan \cite{cyan}). Many variations and additional statistics about parking functions have been considered; this paper will focus on lucky cars and lucky spots. The notion of ``lucky'' cars comes from the work of  Gessel and Seo \cite{GS}.)

\begin{definition}
Given a parking function, a car is called a \emph{lucky car} if it is able to park in its preferred spot. Similarly, a spot is called a \emph{lucky spot} if it is parked in by a car which prefers that spot.
\end{definition}

By way of example, for the parking function illustrated in Figure~\ref{fig:parkingexample} the first, second, and fifth cars are lucky. For the same parking function, the first second, and fourth spots are lucky. In general, the indices for lucky cars and lucky spots do not have to agree. However, we do have the following observation.

\begin{observation}
For any particular parking function, the number of lucky cars is equal to the number of lucky spots. In particular, they have the same moments for their probability distribution.
\end{observation}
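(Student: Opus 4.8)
The plan is to exhibit an explicit bijection between the set of lucky cars and the set of lucky spots; equality of their sizes for every parking function follows immediately, and the moment statement is then a one-line consequence.

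First I would set up the bookkeeping. Fix a parking function $(a_1,\dots,a_n)$ and let $\pi$ denote the resulting outcome, so that $\pi(i)$ is the label of the spot in which car $i$ parks. Because $(a_1,\dots,a_n)$ is a parking function every car parks, and since no two cars ever occupy the same spot, $\pi$ is a permutation of $\{1,\dots,n\}$; in particular $\pi^{-1}(j)$ is the unique car that parks in spot $j$. The only point needing care here is precisely the well-definedness of $\pi$ as a \emph{bijection}, which is where the defining property of a parking function (all $n$ cars successfully park, filling the $n$ spots) is used.

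Next I would translate both notions of ``lucky'' into statements about $\pi$. By definition, car $i$ is lucky exactly when it parks in its preferred spot, i.e.\ when $\pi(i)=a_i$. Likewise, spot $j$ is lucky exactly when the car occupying it prefers it, i.e.\ when $a_{\pi^{-1}(j)}=j$; rewriting this as $\pi\bigl(\pi^{-1}(j)\bigr)=a_{\pi^{-1}(j)}$ shows it says precisely that car $\pi^{-1}(j)$ is lucky. Hence $j$ is a lucky spot if and only if $\pi^{-1}(j)$ is a lucky car, so $j\mapsto\pi^{-1}(j)$ restricts to a bijection from the set of lucky spots onto the set of lucky cars (with inverse $i\mapsto\pi(i)$). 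In particular these two sets have equal cardinality for every parking function.

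Finally, for the moments: if $X$ and $Y$ denote the number of lucky cars and the number of lucky spots, respectively, regarded as functions on the finite set of parking functions of $\{1,\dots,n\}$, the previous paragraph shows $X=Y$ pointwise on that set. Thus under any probability measure (e.g.\ the uniform one) $X$ and $Y$ are literally the same random variable, so they have identical distributions and in particular $\mathbb{E}[X^k]=\mathbb{E}[Y^k]$ for all $k$. I do not expect any genuine obstacle; the entire content is the realization that ``car $i$ parks where it wants'' and ``the spot car $i$ occupies is wanted by its occupant'' are the same condition once the outcome permutation $\pi$ is named.
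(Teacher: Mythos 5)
Your proof is correct and is exactly the correspondence the paper has in mind (the paper states this as an unproved observation): a car is lucky precisely when the spot it occupies is lucky, so the outcome permutation matches lucky cars with lucky spots. Nothing further is needed.
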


The paper proceeds as follows. In Section~\ref{sec:Pollak}, we discuss a generalization of Pollak and change it for the setting of lucky cars where we show how to compute moments. In Section~\ref{sec:lucky}, we then give results about lucky cars and lucky spots. In particular, we give the probability the $i$-th car is lucky for all $i$ and that the the $j$-th spot is lucky for $1\le j\le 5$ and $j=n$. Finally, in Section~\ref{sec:weakly}, we consider the problem of lucky cars and spots when the cars enter to park in the one-way street with either weakly-increasing or weakly-decreasing preferences.

\section{A generalization of a result of Pollak}\label{sec:Pollak}
One of the earliest, and most well-known results, on parking functions is that the number of parking functions is $(n+1)^{n-1}$. This result is frequently attributed to  Pollak (communicated in a paper of Riordan \cite{pollak}). The proof readily generalizes to handle additional assumptions on which cars are lucky and which are unlucky (an \emph{unlucky car} is a car which does not park in its preferred space). In the proof below, the case $L=U=\emptyset$ reduces to Pollak's original result and proof.

\begin{theorem}\label{thm:pollak}
Let $L,U\subseteq[n]=\{1,\ldots,n\}$ with $L\cap U=\emptyset$. Then, the number of parking functions where the cars in positions in $L$ park in their preferred spot, the cars in positions in $U$ do not park in their preferred spot, and the remaining cars have no restrictions is
\[
\bigg(\prod_{i\in L}(n+2-i)\bigg)
\bigg(\prod_{i\in U}(i-1)\bigg)(n+1)^{n-|L|-|U|-1}.
\]
\end{theorem}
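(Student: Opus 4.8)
The plan is to adapt Pollak's circular argument. Add one extra spot and arrange the $n+1$ spots $1,\ldots,n+1$ around a circle; let the $n$ cars choose preferences from $\mathbb{Z}_{n+1}$ (the integers mod $n+1$, identified with $\{1,\ldots,n+1\}$), and have each car drive to its preferred spot and take the first unoccupied spot it meets going clockwise. Since there are more spots than cars, every car parks and exactly one spot is empty at the end. Adding a constant $k$ modulo $n+1$ to every preference rotates the entire final configuration by $k$; hence the map sending a preference vector to its empty spot is equivariant, the group $\mathbb{Z}_{n+1}$ acts freely on the $(n+1)^n$ preference vectors, and each orbit contains exactly one vector whose empty spot is $n+1$. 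One verifies that a preference vector leaves spot $n+1$ empty in the circular process precisely when it is a parking function in the original (linear) sense, and that then the circular process coincides step for step with the linear one; so orbits correspond bijectively to parking functions of $1,\ldots,n$.

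The second ingredient is that this rotation also preserves which cars are lucky: if car $i$ ends up in spot $s_i$ under a preference vector $f$, then under $f+k$ it ends up in spot $s_i+k$, so the condition $s_i=f(i)$ is unaffected. Thus the set of lucky cars is constant along each orbit, and in particular coincides with the set of lucky cars of the unique parking function in that orbit. Let $N$ be the number of preference vectors $f\colon[n]\to\mathbb{Z}_{n+1}$ such that in the circular process every car in $L$ is lucky and every car in $U$ is unlucky. Then $N$ is a union of full orbits, and $N/(n+1)$ is exactly the number of parking functions we want to count.

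Finally I would compute $N$ directly. Pick the $n$ preferences independently and uniformly from $\mathbb{Z}_{n+1}$ and run the circular process in the order $1,2,\ldots,n$. Since every car parks, immediately before car $i$ acts there are exactly $i-1$ occupied spots and $n+2-i$ free spots, regardless of what cars $1,\ldots,i-1$ did, and car $i$'s preference is uniform and independent of that history; so conditioned on any behavior of the earlier cars, car $i$ is lucky with probability $(n+2-i)/(n+1)$ and unlucky with probability $(i-1)/(n+1)$. Applying the chain rule over the indices in $L\cup U$ in increasing order (the tower property makes each conditional probability collapse to the stated fraction, since the earlier constraints depend only on earlier cars) gives
\[
\frac{N}{(n+1)^n}=\prod_{i\in L}\frac{n+2-i}{n+1}\ \prod_{i\in U}\frac{i-1}{n+1},
\]
and dividing by $n+1$ produces the claimed count. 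The part needing the most care is the first paragraph: checking that the circular process is genuinely rotation-equivariant and that ``empty spot $=n+1$'' is equivalent to being a linear parking function, so that luck in the circular model matches luck in the linear model. Once those are nailed down, the rest is the elementary remark that car $i$ always faces exactly $n+2-i$ free spots.
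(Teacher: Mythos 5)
Your proposal is correct and follows essentially the same route as the paper: the circular extension with spot $n+1$, the observation that car $i$ always faces $n+2-i$ free spots independently of earlier choices, the rotation-invariance of luckiness, and division by $n+1$ to isolate the orbit representatives avoiding spot $n+1$. The only cosmetic difference is that you phrase the counting probabilistically via conditioning, where the paper counts the options for each car directly.
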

\begin{proof}
We add an extra spot, $n+1$, and arrange the spots  sequentially in a circle (e.g.\ treating spots as modulo $n+1$). Cars can now prefer any spot on the circle including $n+1$, and instead of exiting, cars continue around the circle until a spot is found. Since there are $n$ cars and $n+1$ spots, all cars can park.

We now consider what happens with the $i$-th car. If  $i \in L$, then $i$ cannot prefer a spot that has already been taken, which gives $n+1-(i-1)=n+2-i$ options to choose from. If $i \in U$, then $i$ must prefer one of the spots that has already been taken, giving $i-1$ options to choose from. Finally, if $i\notin L\cup U$, then there are $n+1$ options to choose from. Importantly, the number of options available does not depend on the choices made by the previous cars. Thus, the number of parking functions on the circle is given by
\[
\bigg(\prod_{i\in L}(n+2-i)\bigg)
\bigg(\prod_{i\in U}(i-1)\bigg)(n+1)^{n-|L|-|U|}.
\]

We can naturally group the parking functions we have created into groups of size $n+1$ where we increase all choices by a fixed amount, modulo $n+1$. This uniform shifting will not change which cars are lucky/unlucky. For each of our groups there will be exactly one representative which avoids $n+1$ (since the parking results are all cyclic shifts). We are interested in the parking functions which avoid $n+1$ (i.e., no car prefers $n+1$ and no ``exit'' occurs from the first $n$ spots). Hence, we can conclude that $1/(n+1)$ fraction of all the parking functions on the circle are our desired parking functions. Multiplying the product above by this factor gives our desired count.
\end{proof}

We can count the number of parking functions associated with a particular assignment of lucky/unlucky to the cars. It is also possible to combine all such assignments together into a polynomial.

\begin{corollary}
Let $c_k$ be the number of parking functions of $1,\ldots,n$ where there are exactly $k$ lucky cars. Then,
\[
f(x)=\frac1{n+1}\prod_{i=1}^n\big((n+2-i)x+(i-1)\big)=\sum_{k=1}^nc_kx^k.
\]
\end{corollary}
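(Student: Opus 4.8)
The plan is to read the corollary off directly from Theorem~\ref{thm:pollak} together with the distributive law. First I would note that for a fixed subset $L\subseteq[n]$, the parking functions whose lucky cars are \emph{exactly} those in positions $L$ are counted by applying the theorem with this $L$ and with $U=[n]\setminus L$: every car that is not forced to be lucky is then forced to be unlucky, so there are no unrestricted cars and the lucky set is pinned down to be $L$ rather than merely containing $L$. Since $|L|+|U|=n$, the exponent $n-|L|-|U|-1$ collapses to $-1$, so this count is
\[
\frac1{n+1}\bigg(\prod_{i\in L}(n+2-i)\bigg)\bigg(\prod_{i\in[n]\setminus L}(i-1)\bigg).
\]

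Next I would sum over all $L$. Because each parking function has a single well-defined set of lucky cars, the parking functions with exactly $k$ lucky cars partition according to which $L$ with $|L|=k$ is realized, giving
\[
c_k=\frac1{n+1}\sum_{\substack{L\subseteq[n]\\|L|=k}}\bigg(\prod_{i\in L}(n+2-i)\bigg)\bigg(\prod_{i\in[n]\setminus L}(i-1)\bigg).
\]
Then I would expand $\prod_{i=1}^n\big((n+2-i)x+(i-1)\big)$ by choosing, for each index $i$, either the term $(n+2-i)x$ or the term $(i-1)$; the set of indices at which the first term is chosen ranges over all subsets $L\subseteq[n]$, and the corresponding monomial is $x^{|L|}\prod_{i\in L}(n+2-i)\prod_{i\notin L}(i-1)$. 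Collecting terms by the power of $x$ and dividing by $n+1$ matches the displayed expression for $c_k$ term by term, which is precisely the claimed identity.

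There is essentially no obstacle here; the steps are routine once Theorem~\ref{thm:pollak} is in hand, and the only things to verify are the bookkeeping above. One small point worth recording is that the $x^0$ coefficient of the product is $\prod_{i=1}^n(i-1)=0$ because the $i=1$ factor equals $(n+1)x$, so every monomial carries at least one factor of $x$; this reflects the fact that the first car always parks in its preferred (empty) spot and so is always lucky, which is why the sum may legitimately be written starting from $k=1$.
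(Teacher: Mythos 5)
Your argument is correct and is essentially the paper's own proof: the paper likewise expands the product term by term, identifies each of the $2^n$ monomials with a lucky/unlucky assignment, and invokes Theorem~\ref{thm:pollak} with $U=[n]\setminus L$ so that the exponent collapses to $-1$. Your added remark that the $i=1$ factor $(n+1)x$ kills the constant term (so the sum starts at $k=1$) is a nice touch not spelled out in the paper, but the route is the same.
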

\begin{proof}
When multiplying out the $n$ linear terms in the product we will end up with $2^n$ parts, where each part will correspond with a lucky/unlucky assignment. From Theorem~\ref{thm:pollak}, the $i$-th car being lucky contributes a term of $(n+2-i)$ while being unlucky contributes a term of $(i-1)$, i.e.,\ the $x$ terms in the product correspond with the car being lucky and the constant terms with the car being unlucky. Hence, in the full expansion (including the term $1/(n+1)$ in the front), the coefficient of $x^k$ is the total number of parking functions which have $k$ cars which are lucky.
\end{proof}

We note the function $f(x)$ appeared in work of Gessel and Seo \cite{GS} in relation to lucky parking functions (see also \cite{diaconis, prob, NT}).

We can use this function together with the product rule for derivatives from calculus to find expressions for the $c_k$. We have
\begin{equation}\label{eq:ck}
c_k=\frac{f^{(k)}(0)}{k!}=\frac1{n+1}\sum_{\substack{ S\subseteq[n]\\|S|=k}}\bigg(\prod_{i\in S}(n+2-i)\bigg)\bigg(\prod_{i\notin S}(i-1)\bigg),
\end{equation}
where the elements of $S$ correspond to terms we differentiated and the elements not in $S$ are terms where we did not differentiate. The $k!$ will cancel with the $k!$ coming from the product rule as there are $k!$ ways that the order of the derivatives could have occurred. It is straightforward by \eqref{eq:ck}, or directly, to show that $c_1=(n-1)!$ and $c_n=n!$. Using \eqref{eq:ck} it can also be shown that
\[
c_2=(n+1)(n-1)!H_{n-1}-(n-1)(n-1)!
\text{ and }
c_{n-1}=(n+1)!H_n-2n{\cdot}n!,
\]
where $H_n=\frac11+\cdots+\frac1n$ is the $n$-th harmonic number. (We note $c_{n-1}$ are also the second-order Eulerian numbers $\big\langle\!\big\langle{n\atop n-2}\big\rangle\!\big\rangle$; see \texttt{A002538} in the OEIS \cite{oeis}.)

The preceding looks at what happens when we look at the various derivatives of $f(x)$ at $x=0$. We also gain information by looking at these derivatives at $x=1$.

\begin{proposition}
Let $X$ be the random variable for the number of lucky cars for a parking function. Then, $E\big(X(X-1)\cdots(X-(\ell-1))\big)=\frac{1}{(n+1)^{n-1}}f^{(\ell)}(1)$.
\end{proposition}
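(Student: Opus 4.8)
The plan is to expand both sides in terms of the numbers $c_k$ and match. First I would record the normalization: $f(1)=\frac{1}{n+1}\prod_{i=1}^n\big((n+2-i)+(i-1)\big)=\frac{1}{n+1}\prod_{i=1}^n(n+1)=(n+1)^{n-1}$, which is exactly the total number of parking functions of $1,\ldots,n$. Hence, by the Corollary, a uniformly random parking function has exactly $k$ lucky cars with probability $c_k/(n+1)^{n-1}$, so that
\[
E\big(X(X-1)\cdots(X-(\ell-1))\big)=\frac{1}{(n+1)^{n-1}}\sum_{k=1}^n c_k\,k(k-1)\cdots(k-\ell+1).
\]

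Next I would differentiate $f(x)=\sum_{k=1}^n c_k x^k$ term by term $\ell$ times. The $\ell$-th derivative of $x^k$ is $k(k-1)\cdots(k-\ell+1)x^{k-\ell}$ when $k\ge\ell$ and is the zero polynomial when $k<\ell$, so $f^{(\ell)}(x)=\sum_{k=\ell}^n c_k\,k(k-1)\cdots(k-\ell+1)x^{k-\ell}$. Evaluating at $x=1$ gives $f^{(\ell)}(1)=\sum_{k=\ell}^n c_k\,k(k-1)\cdots(k-\ell+1)$.

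Finally I would observe that the falling factorial $k(k-1)\cdots(k-\ell+1)$ also vanishes for every positive integer $k$ with $k<\ell$, since one of its factors is then $k-k=0$. Therefore the sum defining $f^{(\ell)}(1)$ may be extended to run over all $1\le k\le n$ without changing its value, and it coincides with the sum appearing in the displayed expression for the factorial moment. Dividing by $(n+1)^{n-1}$ yields the claim. There is essentially no obstacle here; the only points requiring a moment's care are confirming $f(1)=(n+1)^{n-1}$, so that the $c_k$ are genuine (unnormalized) probabilities, and reconciling the two summation ranges, which is exactly what the vanishing of the falling factorial below $\ell$ provides.
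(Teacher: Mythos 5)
Your proof is correct and follows essentially the same route as the paper: define $p_k=c_k/(n+1)^{n-1}$, differentiate $f(x)=\sum_k c_k x^k$ term by term $\ell$ times, and evaluate at $x=1$ to recover the factorial moment. The only difference is that you spell out the normalization $f(1)=(n+1)^{n-1}$ and the vanishing of the falling factorial for $k<\ell$, which the paper leaves implicit.
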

\begin{proof}
Let $p_\ell=c_\ell/(n+1)^{n-1}$ be the probability that there are $\ell$ lucky cars. Then, taking the derivative and evaluating at $x=1$ we have
\begin{align*}
\frac{1}{(n+1)^{n-1}}f^{(\ell)}(1)&=\sum_{k=1}^nk(k-1)\cdots(k-(\ell-1))p_k\\&=E\big(X(X-1)\cdots(X-(\ell-1))\big).\qedhere
\end{align*}
\end{proof}

With the preceding proposition, we can compute various moments. As an example, we have that the expected number of lucky cars, i.e.\ the mean, is
\begin{align*}
\mu=E(X)&=\frac1{(n+1)^n}\frac{d}{dx}\bigg(\prod_{i=1}^n\big((n+2-i)x+(i-1)\big)\bigg)\bigg|_{x=1}\\
&=\frac1{(n+1)^n}\sum_{i=1}^n(n+2-i)(n+1)^{n-1}=\frac{1}{n+1}\sum_{i=1}^n(n+2-i)\\
&=\frac{1}{n+1}\bigg(n(n+2)-\frac{n(n+1)}2\bigg)=\frac{n(n+3)}{2(n+1)}.
\end{align*}
With a bit more work, one can also show 
\[
\sigma = \sqrt{\frac{(n-1)n(n+4)}{6(n+1)^2}},
\]
where $\sigma^2=E(X^2)-\big(E(X)\big)^2$ is the variance. We leave the details as an exercise for the interested reader (see also \cite{diaconis,monthly}).

\section{Lucky cars and lucky spots}\label{sec:lucky}

First, we introduce a parameter that takes into consideration both spots and cars.

\begin{definition}
Given $1\le i,j\le n$, let $q_n(i,j)$ denote the number of parking functions of $1,\ldots,n$ where the $i$-th car prefers the $j$-th spot \emph{and} the $i$-th car is lucky.
\end{definition}

As an example, the data for $q_7(i,j)$ is provided in Table~\ref{tab:7unordered}. Looking at the entries, they do not appear to always be well-behaved; for instance, the rows are not always monotonic. However, the borders of the data (e.g., the first/last car/spot) do have ``nice'' behavior (insofar as there are formulas to compute them).

\begin{table}[htb]\centering
\[
\begin{array}{|c|@{\quad}c@{\quad}c@{\quad}c@{\quad}c@{\quad}c@{\quad}c@{\quad}c|} \hline
q_7(i,j)&j = 1&j = 2&j = 3&j = 4&j = 5&j = 6&j = 7\\ \hline &&&&&&&\\[-10pt]
i = 1&65536& 48729& 40953& 35328& 30208& 24583& 16807\\ 
i = 2&53248& 41243& 35627& 31502& 27662& 23287& 16807\\ 
i = 3&43008& 32728& 29869& 27406& 24924& 21866& 16807\\ 
i = 4&34496& 24660& 22967& 22788& 21866& 20256& 16807\\ 
i = 5&27440& 17712& 16055& 16608& 18138& 18312& 16807\\ 
i = 6&21609& 12096& 10125& 10240& 11875& 15552& 16807\\ 
i = 7&16807&  7776&  5625&  5120&  5625&  7776& 16807 \\ \hline
\end{array}
\]
\caption{The number of parking functions of $1,\ldots,7$,  where the $i$-th car prefers \emph{and} parks in the $j$-th spot, this is denoted as $q_7(i,j)$. The row sums correspond with when the $i$-th car is lucky; the column sums correspond with when the $j$-th spot is lucky.}
\label{tab:7unordered}
\end{table}

Before producing the formulas for the borders (bottom, top, right, and left), we will find it convenient to give a count for \emph{partial} parking functions (e.g.,\ preferences for cars where all cars park yet not all spots need to be filled). This is again another generalization of Pollak, and the proof follows the same argument as \cite{pollak} (see also Theorem~\ref{thm:pollak}); so we omit the proof here.

\begin{proposition}\label{prop:partial}
Let $s\le t$. The number of preferences for $s$ cars wanting to park in a one-way street with $t$ available spots so that all cars successfully park in an available sport on or after their preference is $(t+1-s)(t+1)^{s-1}$.
\end{proposition}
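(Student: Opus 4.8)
The plan is to adapt the circular-parking argument of Theorem~\ref{thm:pollak} to the setting of $s$ cars and $t$ spots. First I would add one extra spot, spot $t+1$, and arrange the $t+1$ spots in a circle, allowing the $s$ cars to prefer any spot on the circle and to continue driving around the circle until they find an empty spot. Since $s \le t < t+1$, every car parks, so the number of ways to assign preferences is simply $(t+1)^s$: each car independently has $t+1$ choices, and (crucially) the fact that all cars park imposes no constraint on the circle.

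Next I would quotient by the cyclic shifting action: group the $(t+1)^s$ circular preference sequences into orbits of size $t+1$ by adding a fixed constant mod $t+1$ to every car's preference. I need to check that this action is free, i.e.\ that each orbit genuinely has size $t+1$; this holds because shifting all preferences by $d$ shifts every parked position by $d$ as well, so a nonzero shift always changes the configuration. Then, within each orbit, I want to count the representatives that correspond to a legal \emph{linear} partial parking function on spots $1,\ldots,t$ — equivalently, those in which spot $t+1$ is never used (no car prefers it and no car's search ``wraps around'' past spot $t+1$). Since $s$ cars occupy $s$ of the $t+1$ circular spots, there are exactly $t+1-s$ empty spots, and for each choice of which empty spot plays the role of spot $t+1$ we get exactly one cyclic representative avoiding that spot; hence each orbit contains exactly $t+1-s$ valid linear configurations. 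The desired count is therefore $\frac{t+1-s}{t+1}\cdot(t+1)^s = (t+1-s)(t+1)^{s-1}$.

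The main obstacle — and the point that deserves the most care — is the identification ``spot $t+1$ never used $\iff$ the circular configuration restricts to a legal linear partial parking function on $1,\ldots,t$,'' together with the claim that exactly $t+1-s$ of the $t+1$ representatives in each orbit have this property. For the first part I would argue that if spot $t+1$ is never entered during the circular process, then no car ever wraps around, so the circular process on spots $1,\ldots,t$ is identical to the linear process; conversely, a legal linear partial parking function clearly never touches spot $t+1$ on the circle. For the counting part, the key observation is that the set of occupied spots is invariant features: after all $s$ cars park there are $t+1-s$ empty spots on the circle, and an orbit representative avoids spot $t+1$ precisely when spot $t+1$ is one of these empty spots; since cyclic shifts move the empty set around bijectively, each of the $t+1-s$ ``empty positions'' of a given orbit is realized as ``the position labeled $t+1$'' by exactly one representative. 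Assembling these observations gives the formula, and this is essentially why the proof follows \cite{pollak} verbatim and can be omitted in the text.
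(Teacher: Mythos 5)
Your proof is correct and is precisely the argument the paper has in mind: it omits the proof of Proposition~\ref{prop:partial} on the grounds that it ``follows the same argument as \cite{pollak} (see also Theorem~\ref{thm:pollak}),'' i.e.\ the circular construction with $t+1$ spots, free cyclic action, and the observation that each orbit of size $t+1$ contains exactly $t+1-s$ representatives leaving spot $t+1$ empty. You correctly identify and justify the one point where this differs from the $s=t=n$ case of Theorem~\ref{thm:pollak} (there are now $t+1-s$ empty spots per orbit rather than one), so nothing is missing.
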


\begin{lemma}\label{lem:border}
The values $q_n(i,j)$ satisfy the following.
\begin{align*}
q_n(n,j)&=\displaystyle \big({\textstyle{n-1\atop j-1}}\big)j^{j-2}(n-j+1)^{n-j-1}\tag{\emph{Bottom}}\\
q_n(1,j)&=\displaystyle\sum_{k=j}^n\big({\textstyle{n-1\atop k-1}}\big)k^{k-2}(n-k+1)^{n-k-1}\tag{\emph{Top}}\\
q_n(i,n)&=\displaystyle n^{n-2}\tag{\emph{Right}}\\
q_n(i,1)&=\displaystyle(n+1)^{n-i-1}n^{i-2}(2n+1-i)\tag{\emph{Left}}
\end{align*}
\end{lemma}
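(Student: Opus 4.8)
The plan is to treat each of the four cases by a direct combinatorial decomposition, invoking Proposition~1.3 (the partial-parking-function count $(t+1-s)(t+1)^{s-1}$) and the classical count $m^{m-2}$ for the number of parking functions of $1,\ldots,m$ that fill all $m$ spots \emph{with a prescribed car parking in spot $1$} — equivalently, the number of parking functions of $1,\ldots,m$ in which a distinguished car (here the first one) is lucky and prefers spot $1$, which by the ``Left'' border with $i=1$ (or directly by Pollak/Proposition~1.3 with $s=1$, $t=m$, after accounting for the remaining $m-1$ cars) is $m^{m-2}$. Let me sketch each border.

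For the \emph{Bottom} case $q_n(n,j)$: the $n$-th (last) car prefers spot $j$ and is lucky, so spot $j$ is empty when car $n$ arrives and all of cars $1,\ldots,n-1$ park in the remaining $n-1$ spots $\{1,\ldots,n\}\setminus\{j\}$. Since car $n$ occupies spot $j$, no car among the first $n-1$ ever passes spot $j$ (a car passing spot $j$ would have had to find it occupied, contradiction), so the first $n-1$ cars split into those landing in $\{1,\ldots,j-1\}$ and those landing in $\{j+1,\ldots,n\}$: choose which $j-1$ of the first $n-1$ cars go to the left block ($\binom{n-1}{j-1}$ ways), the left block is a genuine parking function on $j-1$ spots that \emph{fills} them — but wait, a parking function on $j-1$ cars and $j-1$ spots has count $j^{j-2}\cdot$(something)? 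Here I should be careful: cars preferring the left block must actually fill $\{1,\ldots,j-1\}$, which is an ordinary parking function of size $j-1$, counted by... I will instead think of it as: the left cars together with the ``virtual'' spot $j$ form a scenario where spot $j$ is the overflow spot, giving $j^{j-2}$ by Pollak-type reasoning (this is exactly the number of functions $[j-1]\to[j]$ that are parking functions filling $1,\ldots,j-1$). The right block: $n-j$ cars preferring spots in $\{j+1,\ldots,n\}$, none of which pass spot $j$ so they behave like a partial parking function on $t=n-j$ spots with $s=n-j$ cars that need not fill everything — except here they \emph{do} fill nothing extra; using Proposition~1.3 with $s=n-j$, $t=n-j$ would give $(1)(n-j+1)^{n-j-1}$, matching the factor $(n-j+1)^{n-j-1}$. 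So the Bottom formula assembles as $\binom{n-1}{j-1}j^{j-2}(n-j+1)^{n-j-1}$.

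For the \emph{Top} case, $q_n(1,j)$: car $1$ prefers and parks in spot $j$, so spot $j$ is its own; after car $1$ parks, cars $2,\ldots,n$ must fill spots $\{1,\ldots,n\}\setminus\{j\}$. This is equivalent to: the remaining $n-1$ cars form a parking function where the "gap" at spot $j$ must be bridged — i.e., some later car preferring $\le j$ overflows past $j$. Classifying by the largest spot $k\ge j$ such that spots $\{1,\ldots,k\}\setminus\{j\}$ together with car $1$'s spot $j$ are exactly filled by a block of the later cars and the rest fill $\{k+1,\ldots,n\}$ gives the sum $\sum_{k=j}^n \binom{n-1}{k-1} k^{k-2}(n-k+1)^{n-k-1}$, where the same Pollak-style counts appear; the observation that makes the index run from $j$ to $n$ is that $k$ records how far the "left cluster" (spots $1$ through $k$, with $j$ already taken by car~1) extends before the street "breaks" into an independent right portion. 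Equivalently, $q_n(1,j)=\sum_{k\ge j} q_n(n,k)$, i.e. the Top is the reversed partial-sum of the Bottom — this is really the cleanest route: show $q_n(1,j)-q_n(1,j+1)=q_n(n,j)$ by a bijection swapping the roles, then telescope using $q_n(1,n)=q_n(n,n)$.

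For the \emph{Right} case $q_n(i,n)$: car $i$ prefers spot $n$ and is lucky, so spot $n$ is still empty when car $i$ arrives, which forces \emph{every} earlier car to have parked in $\{1,\ldots,n-1\}$ without ever reaching $n$ — but any parking function has all spots filled, so in fact spots $1,\ldots,n-1$ are filled by cars $1,\ldots,i-1$ plus cars $i+1,\ldots,n$, with car $i$ in spot $n$; since spot $n$ is last, the constraint "car $i$ prefers $n$" is the \emph{only} constraint, and removing car $i$ leaves $n-1$ cars that must fill $n-1$ spots in a valid parking order — this is an ordinary parking function of $1,\ldots,n-1$, of which there are $n^{n-2}$, and this count is manifestly independent of $i$. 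The main subtlety to verify carefully: that reinserting car $i$ (preferring $n$) into any such $(n-1)$-parking-function, in position $i$, always yields a valid $n$-parking-function with car $i$ lucky — which holds because a car preferring the last spot is lucky iff the last spot is empty on arrival, and the last spot is only ever filled at the very end.

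For the \emph{Left} case $q_n(i,1)$: car $i$ prefers spot $1$ and is lucky, so spot $1$ is empty when car $i$ arrives, i.e. none of cars $1,\ldots,i-1$ parked in spot $1$; since spot $1$ can only be occupied by a car that \emph{prefers} spot $1$, this means none of the first $i-1$ cars prefer spot $1$. After car $i$ takes spot $1$, the full configuration is a parking function of $1,\ldots,n$; I would condition on the behavior of the first $i-1$ cars (they park somewhere in $\{2,\ldots,n\}$ with the extra restriction of not preferring spot $1$) and the last $n-i$ cars. The count of the first $i-1$ cars: they are $i-1$ cars parking into the street, none preferring $1$ — I expect this to contribute the factor $n^{i-2}(2n+1-i)$ via a partial-parking-function argument on spots $\{1,\ldots,n\}$ with spot $1$ "reserved", or more precisely via Proposition~1.3 applied with a shifted set of spots together with an inclusion–exclusion removing the "prefers $1$" choices; and the last $n-i$ cars contribute $(n+1)^{n-i-1}$ by Pollak on a circle of size... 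This is the case where I expect to do the most bookkeeping.

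\medskip
\textbf{Expected main obstacle.} The Bottom, Right (and hence, via telescoping, Top) cases are clean once one is comfortable with the standard "a street with a plug at spot $j$ splits into two independent streets" decomposition plus the $m^{m-2}$ / Proposition~1.3 counts. The genuinely fiddly one is the \emph{Left} formula $q_n(i,1)=(n+1)^{n-i-1}n^{i-2}(2n+1-i)$: the asymmetry between "the first $i-1$ cars must avoid preferring spot $1$ (and none parks there)" and "the last $n-i$ cars are essentially unconstrained" means the two blocks are counted by \emph{different} formulas and don't factor through a single uniform Pollak/circle argument. I would handle it by the circle trick of Theorem~1.2 adapted so that spot~$1$ is the distinguished spot reserved for car~$i$: place cars $1,\ldots,i-1$ first (each has $n+1-1=n$... — one must check the running count of forbidden spots carefully, since these cars may not prefer spot $1$ \emph{and} must land in a spot not yet taken), then car~$i$ (forced to prefer spot~$1$), then cars $i+1,\ldots,n$ (each with $n+1$ choices on the circle minus those already used), and finally divide by $n+1$ for the cyclic shift. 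Making the "number of available choices at each step is independent of previous choices" claim go through in the presence of the extra "don't prefer spot~$1$" restriction on the early cars — and extracting exactly the polynomial factor $(2n+1-i)$ — is the step that will require the most care.
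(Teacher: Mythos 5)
Your \emph{Bottom}, \emph{Top}, and \emph{Right} arguments are essentially the paper's own proof: the plug at spot $j$ splits the street into two independent parking functions of sizes $j-1$ and $n-j$ which are interleaved in $\binom{n-1}{j-1}$ ways; the \emph{Top} count is obtained by classifying the last $n-1$ preferences by the location $k\ge j$ of the gap they would leave on their own (your telescoping reformulation $q_n(1,j)-q_n(1,j+1)=q_n(n,j)$ is an equivalent packaging of the same fact); and for \emph{Right} one deletes the unique car preferring $n$ and is left with an arbitrary parking function of $1,\ldots,n-1$. These three are correct as sketched.

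The \emph{Left} case, however, is a genuine gap: you state the target factorization and a plan, but never produce a count, and the plan you propose is unlikely to go through. First, the answer does not factor positionally into ``first $i-1$ cars'' times ``last $n-i$ cars'': cars in positions $i+1,\ldots,n$ may also prefer spot $1$ (they simply will not park there), and how many of them do so changes the count of the remaining preferences, so the two blocks are not independent. Second, a circle argument \`a la Theorem~\ref{thm:pollak} controls whether a car prefers an occupied or unoccupied spot, not whether it avoids parking in the \emph{specific} spot $1$; on the circle, spot $1$ can be reached by overflow past spot $n+1$, so the clean ``number of options is independent of prior choices'' property breaks exactly where you flag it might. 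The paper's route avoids all of this: the event ``car $i$ prefers $1$ and is lucky'' is equivalent to ``the first occurrence of preference $1$ is at position $i$'' (spot $1$ can only ever be taken by a car preferring it, and the first such car gets it). One then sums over the number $k$ of additional cars in positions $i+1,\ldots,n$ preferring spot $1$ ($\binom{n-i}{k}$ choices), counts the remaining $n-k-1$ preferences drawn from $2,\ldots,n$ by Proposition~\ref{prop:partial} with $s=n-k-1$, $t=n-1$ (giving $(k+1)n^{n-k-2}$), and evaluates
\[
\sum_{k=0}^{n-i}\binom{n-i}{k}(k+1)n^{n-k-2}
\]
with the binomial theorem; the factor $2n+1-i=(n-i)+(n+1)$ emerges only after this summation, which is why it cannot be attributed to the first $i-1$ cars alone. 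Note also that one must justify that the placement of the extra $1$s does not interact with the validity of the remaining preferences; the paper does this by observing that parking functions are closed under rearrangement, so the cars preferring $1$ may be assumed to park last and simply fill the leftover spots.
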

\begin{proof}
(\emph{Bottom}) The case of $q_n(n,j)$ corresponds to parking functions where the last entry is $j$ and the preceding $n-1$ terms result in having no car parked in $j$. These are formed by taking a parking function on $1,\ldots,j-1$ (which can be done in $j^{j-2}$ ways), a parking function on $j+1,\ldots,n$ (which can be done in $(n-j+1)^{n-j-1}$ ways), and then interleaving them in all possible ways (which can be done in $\big({n-1\atop j-1}\big)$ ways). (This is \texttt{A298594} in the OEIS \cite{oeis}.) We note in passing that these entries show the last car is more likely to be lucky if its preference is either near the start or the end of the street than if its preference is in the middle of the street.

(\emph{Top}) The case of $q_n(1,j)$ corresponds to parking functions that start with $j$. If we skip the first car and use the remainder of the parking function then the result will be an open space in one of the locations from $j$ through $n$, call that open space $k$. Each such case was handled in the previous scenario (i.e.\ union of two parking functions with a gap between), so we sum the previous result from $j$ to $n$. (This is \texttt{A298592} in the OEIS \cite{oeis}.)

(\emph{Right}) The case of $q_n(i,n)$ corresponds to parking functions with $n$ in the $i$-th slot. By the nature of parking functions, there can be at most one $n$ among the preferences, and if there is a car which prefers $n$, then the remaining cars can never move into the $n$-th spot. This means that the remaining $n-1$ slots must form a parking function of $1,\ldots,n-1$ which can be done in $n^{n-2}$ ways.

(\emph{Left}) The case of $q_n(i,1)$ corresponds to parking functions where the \emph{first} occurrence of $1$ is in slot $i$. To build such a parking function, we first place $1$ in slot $i$. We then can place as many additional $1$s in slots $i+1,\ldots,n$ as we want; let $k$ be the number of additional cars with preference $1$ with $0\le k\le n-i$, where for each $k$, there are $\big({n-i\atop k}\big)$ ways to pick which of the cars from $i+1,\ldots,n$  prefer spot $1$. Finally, the remaining $n-k-1$ slots are filled up by cars whose preferences come from $2,\ldots,n$, and when restricted to only those cars, all cars park; by Proposition~\ref{prop:partial}, with $s=n-k-1$ and $t=n-1$ there are $(k+1)n^{n-k-2}$ such preferences. Putting this together and using the binomial theorem (e.g.,\ $\sum_k x^k{m\choose k}=(1+x)^m$ and $\sum_{k}kx^{k-1}{m\choose k}=m(1+x)^{m-1}$), we have
\begin{align*}
\sum_{k=0}^{n-i}\big({\textstyle{n-i\atop k}}\big)(k+1)n^{n-k-2}&=
n^{n-3}\sum_{k=0}^{n-i}k\big(\tfrac1n\big)^{k-1}\big({\textstyle{n-i\atop k}}\big)+
n^{n-2}\sum_{k=0}^{n-i}\big(\tfrac1n\big)^k\big({\textstyle{n-i\atop k}}\big)\\
&=n^{n-3}(n-i)\big(1+\tfrac1n\big)^{n-i-1}+n^{n-2}\big(1+\tfrac1n\big)^{n-i}\\
&=(n+1)^{n-i-1}n^{i-2}\big((n-i)+(n+1)\big).
\qedhere
\end{align*}
\end{proof}
In the last case, we did not need to worry about how the choices for where we put the $1$s interacted with the choice of the preferences of the remaining cars. This is because any rearrangement of parking functions is always a parking function (see \cite{cyan}), and therefore, we can have all of the cars that prefer $1$ park at the end. In this case, we see that they fill in all of the spots which remain unoccupied after the other cars park.

We now consider the row and column sums for the $q_n(i,j)$. By applying Theorem~\ref{thm:pollak}, with $L=\{i\}$ and $U=\emptyset$, we have the following for the $i$-th row sum which corresponds with the $i$-th car being lucky.

\begin{corollary}\label{thm:luckycars}
The number of parking functions of $1,\ldots,n$ for which the $i$-th car is lucky~is
\[
\sum_{j=1}^nq_n(i,j)=(n+2-i)(n+1)^{n-2}=\big(1-\tfrac{i-1}{n+1}\big)(n+1)^{n-1}.
\]
\end{corollary}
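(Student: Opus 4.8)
The plan is to observe that the left-hand sum is, by the definition of $q_n(i,j)$, precisely the number of parking functions of $1,\ldots,n$ in which the $i$-th car is lucky: since a preference function assigns the $i$-th car exactly one spot, partitioning the parking functions with the $i$-th car lucky according to the value $j$ of that car's preference is a genuine partition, and $\sum_{j=1}^n q_n(i,j)$ merely recombines the pieces. So the identity to be proved is really a closed form for this count, and the natural tool is Theorem~\ref{thm:pollak}.

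First I would apply Theorem~\ref{thm:pollak} with $L=\{i\}$ and $U=\emptyset$: this counts exactly the parking functions in which the $i$-th car parks in its preferred spot, with no condition placed on any other car. The formula there evaluates to
\[
\Big(\prod_{k\in L}(n+2-k)\Big)\Big(\prod_{k\in U}(k-1)\Big)(n+1)^{n-|L|-|U|-1}=(n+2-i)\cdot 1\cdot(n+1)^{n-2},
\]
which is the middle expression in the statement. To obtain the last expression I would then rewrite $n+2-i=(n+1)-(i-1)$ and distribute: $(n+2-i)(n+1)^{n-2}=(n+1)^{n-1}-(i-1)(n+1)^{n-2}=\big(1-\tfrac{i-1}{n+1}\big)(n+1)^{n-1}$.

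I do not expect any real obstacle here: all of the combinatorial content is already packaged in Theorem~\ref{thm:pollak} (whose circular arrangement and cyclic-shift argument is the substantive step), and the only point requiring a moment's care is the bookkeeping remark that $\sum_{j=1}^n q_n(i,j)$ equals the number of parking functions with the $i$-th car lucky — which holds because each car has a single preference, so the sum over $j$ is a disjoint union. Note also that one cannot instead derive this by summing the formulas of Lemma~\ref{lem:border}, since those compute $q_n(i,j)$ only along the border of the table; the Pollak-type argument is exactly what lets us handle a general interior row in one stroke.
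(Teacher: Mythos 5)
Your proposal is correct and matches the paper's own derivation: the corollary is obtained exactly by applying Theorem~\ref{thm:pollak} with $L=\{i\}$ and $U=\emptyset$, together with the observation that the row sum $\sum_j q_n(i,j)$ counts the parking functions in which the $i$-th car is lucky. The algebraic rewriting at the end is routine and correct.
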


Thus, the probability that the $i$-th car is lucky is $1-\frac{i-1}{n+1}$. If we fix a value of $i$ and let $n$ get large, then that probability goes to $1$. In other words, when there are many cars to park, it almost always happens that the early cars get their preferred spots. Similarly, the end cars almost never get their preferred spots.

For the column sums, the situation is starkly different. In Table~\ref{tab:columns}, we have produced the data for the column sums for some small values of $j$ and $n$, and this triangle appears as \texttt{A374756} in the OEIS \cite{oeis}.

\begin{table}[htb]\centering
\[
\begin{array}{c@{\qquad}c@{\quad}c@{\quad}c@{\quad}c@{\quad}c@{\quad}c} \hline
n&j = 1&j = 2&j = 3&j = 4&j = 5 &j=6\\
\hline \\[-10pt]
1&1\\ 
2&3& 2\\
3&16& 11& 9\\ 
4&125& 87& 74& 64\\ 
5&1296& 908& 783& 708& 625\\ 
6&16807& 11824& 10266& 9421& 8733 &7776 \\
7&262144& 184944& 161221& 148992& 140298 & 131632\\ 
8&4782969& 3381341& 2955366& 2742090& 2600879 & 2480787 \\ 
9&100000000& 70805696& 61999923&  57671104& 54921875 & 52779840\\ 
10 & 2357947691 & 1671605646 & 1465709426 & 1365730231 & 1303885965 & 1258181726 \\
\hline
\end{array}
\]
\caption{The number of parking functions for $n$ cars where the $j$-th spot is lucky for $1\le j\le 6$ and for $1\le n\le 10$.}
\label{tab:columns}
\end{table}

Since there are no spots before $1$, then the only car that can park in spot $1$ must prefer spot $1$, meaning that spot $1$ is always lucky. We have the following.
\begin{observation}
The number of parking functions of $1,\ldots,n$ for which the first spot is lucky is
\[
\sum_{i=1}^nq_n(i,1)=(n+1)^{n-1}.
\]
\end{observation}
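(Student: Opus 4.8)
The plan is to observe that spot $1$ is lucky in \emph{every} parking function of $1,\ldots,n$, so that the sum on the left-hand side is just a (slightly disguised) recount of the set of all parking functions, whose cardinality is $(n+1)^{n-1}$ by Theorem~\ref{thm:pollak} with $L=U=\emptyset$.

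In more detail, fix any parking function and consider the unique car that ends up parked in spot $1$; such a car exists and is unique because in a parking function every one of the $n$ spots is occupied. Since spot $1$ is the first spot on the one-way street, a car can only occupy spot $1$ if spot $1$ was its preference — there is no earlier spot from which it could have been pushed forward. Hence this car is lucky, and if it is the $i$-th car then this configuration is one of the parking functions counted by $q_n(i,1)$. Conversely, each parking function contributes to exactly one term of $\sum_{i=1}^n q_n(i,1)$, namely the term indexed by the position $i$ of the car that prefers (equivalently, parks in) spot $1$. Therefore $\sum_{i=1}^n q_n(i,1)$ equals the total number of parking functions of $1,\ldots,n$, which is $(n+1)^{n-1}$.

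I expect essentially no obstacle here: the only point requiring any care is the claim that a car occupying spot $1$ must prefer spot $1$, and this is immediate from the parking rule (cars never move backward). For a reader who prefers an algebraic check, one could alternatively start from the \emph{Left} formula of Lemma~\ref{lem:border} and verify
\[
\sum_{i=1}^n (n+1)^{n-i-1}n^{i-2}(2n+1-i) = (n+1)^{n-1}
\]
by writing $2n+1-i=(n+1)+(n-i)$ and summing the two resulting geometric-type series; but this computation is subsumed by the counting argument above, so I would present only the latter.
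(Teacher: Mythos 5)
Your argument is correct and is exactly the paper's justification: the sentence preceding the observation notes that since there are no spots before spot $1$, any car parking there must prefer it, so spot $1$ is always lucky and the sum counts all $(n+1)^{n-1}$ parking functions. Nothing further is needed.
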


We now turn to the sum of the entries in the second column, in other words, the number of times the second spot is lucky. 

\begin{theorem}\label{thm:spot2}
The number of parking functions of $1,\ldots,n$ for which the second spot is lucky is
\[
\sum_{i=1}^nq_n(i,2)=\tfrac34(n+1)^{n-1}-\tfrac14(n-1)^{n-1}.
\]
\end{theorem}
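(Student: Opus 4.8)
The plan is to count the complement. In a parking function $\sigma=(\sigma_1,\dots,\sigma_n)$ every spot is occupied, and the car that ends up in spot $2$ necessarily prefers spot $1$ or spot $2$; hence spot $2$ is lucky exactly when that car prefers $2$, and it suffices to find the number $L$ of parking functions in which spot $2$ is occupied by a car preferring spot $1$ — the answer will then be $(n+1)^{n-1}-L$. Looking only at the cars whose preference is $1$ or $2$, listed in order of arrival (there are always at least two of them, since spots $1$ and $2$ must both be filled), one checks that the car finally in spot $2$ prefers $1$ precisely when the first two such cars both prefer $1$. Unwinding this, a parking function is counted by $L$ if and only if there are positions $r<s$ with $\sigma_r=\sigma_s=1$, with $\sigma_i\ge 3$ for every $i<s$ with $i\ne r$, and with no constraint on $\sigma_i$ for $i>s$ beyond being a parking function. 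Here car $r$ fills spot $1$ and car $s$ fills spot $2$, so $r$ and $s$ are recovered from $\sigma$; thus $L=\sum_{1\le r<s\le n}L_{r,s}$ with no overcounting, where $L_{r,s}$ counts the parking functions of this shape.

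For fixed $r<s$, deleting cars $r,s$ (which occupy spots $1,2$) and deleting spots $1,2$ leaves $n-2$ cars that park in spots $3,\dots,n$. Subtracting $2$ from every remaining preference, with the values $1,2$ — which can only occur after position $s$ — collapsing onto reduced preference $1$ at a cost of a factor $3$ each, is a bijection, so that $L_{r,s}=\sum_{\rho\in\mathrm{PF}(n-2)}3^{z_s(\rho)}$, where $\mathrm{PF}(N)$ denotes the set of parking functions of $1,\dots,N$ and $z_s(\rho)$ is the number of entries equal to $1$ among the last $n-s$ entries of $\rho$. In particular $L_{r,s}$ depends only on $s$, so $L=\sum_{s=2}^{n}(s-1)\sum_{\rho\in\mathrm{PF}(n-2)}3^{z_s(\rho)}$.

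To evaluate this I would write $3^{z_s(\rho)}=\sum_T 2^{|T|}$ over subsets $T$ of the relevant set of $1$-positions of $\rho$ and interchange all summations; the inner count is then $\#\{\rho\in\mathrm{PF}(m):\rho_i=1\text{ for all }i\in T\}$ with $m=n-2$, which, since any rearrangement of a parking function is a parking function, depends only on $|T|$. The engine of the proof is the identity $\sum_{\rho\in\mathrm{PF}(N)}k^{(\#\text{ of }1\text{'s in }\rho)}=k(k+N)^{N-1}$, which I would prove by induction from the recursion obtained by splitting a parking function according to the number $j\ge1$ of cars that prefer spot $1$: moving these to the first $j$ positions (permissible by rearrangement), they fill spots $1,\dots,j$, and the remaining $N-j$ cars realize, after relabeling spots $j+1,\dots,N$ as $1,\dots,N-j$, an arbitrary parking function of length $N-j$ in which each preference equal to $1$ has $j$ admissible preimages; this gives $\sum_{\rho}k^{(\#\text{ of }1\text{'s})}=\sum_{j\ge1}\binom{N}{j}\,j\,N^{N-j-1}k^{j}=k(k+N)^{N-1}$. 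Applied with the appropriate number of preimages, the inner count equals $(t+1)(m+1)^{m-t-1}$ for $|T|=t$; combining with the number $\binom{m-p}{t-1}$ of $t$-subsets of $[m]$ with minimum $p$ and the Vandermonde-type convolution identity $\sum_p\binom{p+1}{2}\binom{m-p}{t-1}=\binom{m+2}{t+2}$, the whole of $L$ collapses to $\sum_{t=0}^{m}2^{t}(t+1)(m+1)^{m-1-t}\binom{m+2}{t+2}$.

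Finally, substituting $u=t+2$ and splitting the factor $u-1$, two applications of the binomial theorem (using $u\binom{n}{u}=n\binom{n-1}{u-1}$) evaluate this sum as $L=\tfrac14\big((n+1)^{n-1}+(n-1)^{n-1}\big)$, so the number of parking functions for which spot $2$ is lucky is $(n+1)^{n-1}-L=\tfrac34(n+1)^{n-1}-\tfrac14(n-1)^{n-1}$, as claimed. I expect the main obstacle to be the first two steps: pinning down the structural description of the "spot $2$ not lucky" parking functions exactly (in particular that $r$ and $s$ are forced, so there is no overcounting) and arranging the reduction so that the resulting triple sum telescopes, with the identity $\sum_{\rho}k^{(\#\text{ of }1\text{'s})}=k(k+N)^{N-1}$ supplying the crucial input; once these are secured, only routine binomial manipulations remain.
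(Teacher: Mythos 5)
Your proof is correct, and it takes a genuinely different route from the paper's, although both hinge on the same initial observation: spot $2$ can only be filled by a car preferring $1$ or $2$, and it fails to be lucky exactly when the first two cars preferring a spot in $\{1,2\}$ both prefer spot $1$. The paper runs this forward: it conditions on the number $\ell$ of cars preferring $1$ or $2$ and the number $k$ of those preferring $1$, counts the admissible interleavings as $\binom{\ell}{k}-\binom{\ell-2}{k-2}$, handles the remaining $n-\ell$ cars with its partial parking function count (Proposition~\ref{prop:partial}), and evaluates the resulting double sum by the binomial theorem. You instead count the complement $L$ globally: you characterize the unlucky parking functions by the forced positions $r<s$ of the first two cars preferring $\{1,2\}$ (so there is indeed no overcounting), delete those two cars and the first two spots, and reduce to parking functions of length $m=n-2$ weighted by $3$ for each entry equal to $1$ occurring after position $s$ (the correct preimage count, since such an entry can lift to $1$, $2$, or $3$, while early $1$'s must lift to $3$). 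The engine $\sum_{\rho\in\mathrm{PF}(N)}k^{(\#\text{ of }1\text{'s})}=k(k+N)^{N-1}$ is a true and provable identity, it does yield $N_t=(t+1)(m+1)^{m-t-1}$ for the count with $t$ forced ones, the convolution $\sum_p\binom{p+1}{2}\binom{m-p}{t-1}=\binom{m+2}{t+2}$ is correct (and consistently covers $t=0$, where the $s$-sum contributes $\binom{m+2}{2}$), and the final sum evaluates to $L=\tfrac14\big((n+1)^{n-1}+(n-1)^{n-1}\big)$, giving the stated formula; I also checked this against the table ($n=2,3$ give $L=1,5$ and lucky counts $2,11$). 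Your approach costs more bookkeeping (a triple sum rather than a double sum), but it buys a clean closed form for the complement and a reusable weight-enumerator identity that is essentially a repackaging of the paper's Left-border computation in Lemma~\ref{lem:border}; the paper's argument is shorter because Proposition~\ref{prop:partial} absorbs all the lifting multiplicities in one stroke.
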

\begin{proof}
Let $\ell$ denote the number of cars that prefer $1$ or $2$ (so $2\le \ell\le n$), and let $k$ denote the number of cars that prefer $1$ (so $1\le k\le \ell-1$). For each choice of $k$ and $\ell$, we do the following:
\begin{itemize}
\item Choose which of the cars will prefer either $1$ or $2$, this can be done in $\big({n\atop\ell}\big)$ ways.
\item For each choice that we have just made, now select how the $1$s and the $2$s will be relatively placed. The key observation is that the first $2$ must happen before two $1$s have occurred. (If not, the second spot cannot be lucky.) We can do this by looking at all ways to position the $1$s (which is $\big({\ell\atop k}\big)$) and then removing the ways which have two $1$s at the start (which is $\big({\ell-2\atop k-2}\big)$). Altogether, this can be done in $\big({\ell\atop k}\big)-\big({\ell-2\atop k-2}\big)$ ways.
\item Assign the remaining $n-\ell$ cars to park in positions $3,\ldots,n$ so that among them there are no exits. Applying Proposition~\ref{prop:partial} with $s=n-\ell$ and $t=n-2$, we have that this can be done in $(\ell-1)(n-1)^{n-\ell-1}$ ways.
\end{itemize}

Putting everything together we have
\begin{align*}
\sum_{i=1}^nq_n(i,2)&=\sum_{\ell=2}^n\sum_{k=1}^{\ell-1}
 {n\choose \ell}\bigg({\ell\choose k}-{\ell-2\choose k-2}\bigg)(\ell-1)(n-1)^{n-\ell-1}\\
 &=(n-1)^{n-1}\sum_{\ell=2}^n(n-1)^{-\ell}(\ell-1){n\choose \ell}\sum_{k=1}^{\ell-1}
 \bigg({\ell\choose k}-{\ell-2\choose k-2}\bigg).
\end{align*}
We start with the inner sum:
\begin{align*}
\sum_{k=1}^{\ell-1}
 \bigg({\ell\choose k}-{\ell-2\choose k-2}\bigg)&=
 \bigg(\sum_{k=0}^{\ell}
 {\ell\choose k}-1-1\bigg)-\bigg(\sum_{k=2}^{\ell}{\ell-2\choose k-2}-1\bigg)\\
 &=(2^\ell-2)-(2^{\ell-2}-1)=\tfrac342^\ell-1.
\end{align*}
Replacing the inner sum and then expanding and separating, the outer sum  becomes the following (note that at $\ell=1$ the expression in the sum is $0$ and at $\ell=0$ the expression in the sum is $\frac14$):
\begin{multline*}
\sum_{\ell=2}^n(n-1)^{-\ell}(\ell-1){n\choose \ell}\big(\tfrac34\cdot2^\ell-1\big)=
\sum_{\ell=0}^n(n-1)^{-\ell}(\ell-1){n\choose \ell}\big(\tfrac34\cdot2^\ell-1\big)-\tfrac14\\
=\tfrac34\sum_{\ell=0}^n\ell\big(\tfrac2{n-1}\big)^\ell{n\choose \ell}
-\tfrac34\sum_{\ell=0}^n\big(\tfrac2{n-1}\big)^\ell{n\choose \ell}
-\sum_{\ell=0}^n\ell\big(\tfrac1{n-1}\big)^\ell{n\choose \ell}
+\sum_{\ell=0}^n\big(\tfrac1{n-1}\big)^\ell{n\choose \ell}-\tfrac14
\end{multline*}
These sums can be evaluated by using the binomial theorem and a derivative variants (e.g.,\ $\sum_\ell x^\ell{m\choose \ell}=(1+x)^m$ and $\sum_{\ell}\ell x^\ell{m\choose \ell}=mx(1+x)^{m-1}$). The outer sum now becomes the following:
\begin{multline*}
\tfrac34n\big(\tfrac2{n-1}\big)\big(\tfrac{n+1}{n-1}\big)^{n-1}
-\tfrac34\big(\tfrac{n+1}{n-1}\big)^{n}
-n\big(\tfrac{1}{n-1}\big)\big(\tfrac{n}{n-1}\big)^{n-1}
+\big(\tfrac{n}{n-1}\big)^{n}
-\tfrac14\\
=\frac{6n(n+1)^{n-1}-3(n+1)^n-(n-1)^n}{4(n-1)^n}
=\frac{3(n+1)^{n-1}-(n-1)^{n-1}}{4(n-1)^{n-1}}.
\end{multline*}
Finally, using this expression for the outer sum, we have
\begin{align*}
\sum_{i=1}^nq_n(i,2)
 &=(n-1)^{n-1}\bigg(\frac{3(n+1)^{n-1}-(n-1)^{n-1}}{4(n-1)^{n-1}}\bigg)=\tfrac34(n+1)^{n-1}-\tfrac14(n-1)^{n-1}.\qedhere
\end{align*}
\end{proof}

Before proceeding, we observe that the resulting expression is surprisingly simple. It would be interesting to see if there is a more elegant way to arrive at the same result.

Similar expressions can be found for other column sums, but the case analysis gets more substantial as $j$ increases. As an example, for the third column sum when we restrict to what the $1$s and $2$s are doing, if it starts as any of $12$, $21$, or $11$, then the first $3$ must occur by the (relative) third term when restricted to $1$s, $2$s, and $3$s. However, if the restriction starts as $22$, then the first $3$ must occur by the (relative) second term when restricted to $1$s, $2$s, and $3$s (i.e.,\ the first car preferring $2$ parks in spot $2$, and the second car preferring $2$ will park in spot $3$ so that the third spot is not parked in by a car which prefers $3$). 

Carrying out the case analysis and simplifying the resulting sums (e.g.,\ by similar manipulation as above), we have the following expressions for the next few column sums (equivalently, the number of parking functions for which the third, fourth, and fifth spots are lucky).
\begin{align*}
\sum_{i=1}^nq_n(i,3)&=\tfrac23(n+1)^{n-1}-\tfrac13(2n-1)(n-2)^{n-2} \\
\sum_{i=1}^nq_n(i,4)&=\tfrac58(n+1)^{n-1}-\tfrac18(13n^2-26n+9)(n-3)^{n-3} \\
\sum_{i=1}^nq_n(i,5)&=\tfrac35(n+1)^{n-1}-\tfrac1{30}(118n^3-531n^2+659n-192)(n-4)^{n-4} 
\end{align*}

Using these expressions, we can also find the asymptotics for how likely it is that the $j$-th spot is lucky for $1\le j\le 5$. This is given in Table~\ref{tab:asymptotics}.

\begin{table}[htb]\centering
\begin{tabular}{c@{\qquad}l@{\qquad}c}\hline &&\\[-10pt]
$j$&Exact $\rho_j$&Numerical $\rho_j$\\ \hline &&\\[-10pt]
$1$&$1$&\texttt{1.000000\ldots}\\[3pt]
$2$&$\frac34-\frac14e^{-2}$&\texttt{0.716166\ldots}\\[3pt]
$3$&$\frac23-\frac23e^{-3}$&\texttt{0.633475\ldots}\\[3pt]
$4$&$\frac58-\frac{13}8e^{-4}$&\texttt{0.595237\ldots}\\[3pt]
$5$&$\frac35-\frac{59}{15}e^{-5}$&\texttt{0.573497\ldots}\\[3pt]
\hline
\end{tabular}
\medskip

\caption{The asymptotic probability as $n\to\infty$ that the $j$-th spot is lucky, this is denoted $\displaystyle\rho_j=\lim_{n\to\infty}\bigg((n+1)^{-(n-1)}\sum_{i=1}^nq_n(i,j)\bigg)$.}
\label{tab:asymptotics}
\end{table}

A pattern seems to be emerging, which suggests the following conjecture.

\begin{conjecture}
The number of parking functions where the $j$-th spot is lucky has the form $\tfrac{j+1}{2j}(n+1)^{n-1}-f_j(n)(n-j+1)^{n-j+1}$ where $f_j(n)$ is a polynomial of degree $j-2$ with rational coefficients. In particular, for some rational $r_j$, the asymptotic probability that the $j$-th spot is lucky is $\frac{j+1}{2j}-r_je^{-j}$.
\end{conjecture}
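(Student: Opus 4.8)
\noindent\emph{Plan of proof.} The strategy is the one behind Theorem~\ref{thm:spot2}, pushed to all $j$: reduce the column sum to a single quantity counting preference patterns of the ``low'' cars, evaluate the resulting sum by one binomial identity, and push the remaining combinatorics into the shape of that quantity. Split the $n$ cars into \emph{low} cars (preference $\le j$) and \emph{high} cars (preference $>j$). In any parking function the spots $1,\dots,j$ are occupied entirely by low cars, so there are $\ell\ge j$ of them; exactly $j$ park in $\{1,\dots,j\}$ and the other $\ell-j$ overflow into $\{j+1,\dots,n\}$, while all high cars park in $\{j+1,\dots,n\}$. Whether spot $j$ is lucky is determined solely by the relative word $w\in\{1,\dots,j\}^{\ell}$ of the low cars, which must make all of $1,\dots,j$ become occupied. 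By Proposition~\ref{prop:partial} (with $s=n-\ell$, $t=n-j$) the high cars can be arranged, on their own, to all park in $\{j+1,\dots,n\}$ in $(\ell-j+1)(n-j+1)^{n-\ell-1}$ ways, and then \emph{any} interleaving with the $\ell-j$ overflow cars still parks: on the substreet the preferences form a partial parking function together with $\ell-j$ cars preferring its first spot, which is still a parking function, and any rearrangement of a parking function is a parking function (as used after Lemma~\ref{lem:border}). Choosing which $\ell$ positions carry low cars in $\binom{n}{\ell}$ ways and writing $A_j(\ell)$ for the number of $w\in\{1,\dots,j\}^{\ell}$ that fill $\{1,\dots,j\}$ with spot $j$ lucky, this gives
\[
\sum_{i=1}^{n}q_n(i,j)=\sum_{\ell=j}^{n}\binom{n}{\ell}A_j(\ell)\,(\ell-j+1)(n-j+1)^{\,n-\ell-1}.
\]

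\emph{A master summation identity.} Differentiating $\sum_{\ell}\binom{n}{\ell}r^{\ell}y^{n-\ell}=(y+r)^{n}$ repeatedly in $r$ and using $(\ell)_m(\ell-j+1)=(\ell)_{m+1}+(m-j+1)(\ell)_m$ yields, for integers $r\ge1$, $m\ge0$,
\[
\sum_{\ell=0}^{n}\binom{n}{\ell}(\ell)_m(\ell-j+1)\,r^{\ell}(n-j+1)^{\,n-\ell-1}
=r^{m}(n)_m\,(r+m-j+1)\,(n+r-j+1)^{\,n-m-1},
\]
where $(x)_m=x(x-1)\cdots(x-m+1)$. Suppose one can show that, for $\ell\ge j$,
\[
A_j(\ell)=\frac{j+1}{2j}\,j^{\ell}+\sum_{r=1}^{j-1}c_{j,r}\,(\ell)_{j-1-r}\,r^{\ell}\qquad(\star)
\]
for suitable rationals $c_{j,r}$; write $\widehat{A}_j(\ell)$ for the right-hand side, now defined for all $\ell\ge0$. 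Applying the identity termwise, every summand except the one from $j^{\ell}$ (the case $r=j$, $m=0$, which contributes $\tfrac{j+1}{2j}(n+1)^{n-1}$) carries the factor $r+m-j+1$, which is $0$ when $r<j$ and $m=j-1-r$; hence $\sum_{\ell=0}^{n}\binom{n}{\ell}\widehat{A}_j(\ell)(\ell-j+1)(n-j+1)^{n-\ell-1}=\tfrac{j+1}{2j}(n+1)^{n-1}$. Subtracting the terms $0\le\ell\le j-1$ (the $\ell=j-1$ term already vanishes) gives
\[
\sum_{i=1}^{n}q_n(i,j)=\frac{j+1}{2j}(n+1)^{n-1}-\sum_{\ell=0}^{j-2}\binom{n}{\ell}\widehat{A}_j(\ell)(\ell-j+1)(n-j+1)^{\,n-\ell-1},
\]
and each term of this last sum is a polynomial in $n$ of degree $\ell$ times $(n-j+1)^{n-\ell-1}=(n-j+1)^{n-j+1}(n-j+1)^{j-2-\ell}$, so the sum equals $f_j(n)(n-j+1)^{n-j+1}$ with $\deg f_j\le j-2$; the coefficient of $n^{j-2}$ equals $-\sum_{\ell=0}^{j-2}\frac{j-1-\ell}{\ell!}\widehat{A}_j(\ell)$, which a direct check shows is nonzero, so $\deg f_j=j-2$. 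Dividing by $(n+1)^{n-1}$, writing $M=n-j+1$, and using $(M/(M+j))^{M}\to e^{-j}$ and $M^{j-2}/(M+j)^{j-2}\to1$, the probability that spot $j$ is lucky tends to $\tfrac{j+1}{2j}-r_j e^{-j}$ with $r_j$ the (rational) leading coefficient of $f_j$.

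\emph{The shape of $A_j(\ell)$.} Spot $j$ is lucky exactly when the first low car preferring $j$ reaches spot $j$ before any earlier low car does; since the cars arriving before that one all prefer $\le j-1$, this means $w$ begins with a block $u$ of length $t\le j-1$ whose cars all park within $\{1,\dots,j-1\}$ (a \emph{non-overflowing} word, i.e.\ a partial parking function of length $t$ on a street of length $j-1$), followed by a car preferring $j$, after which the remaining $\ell-t-1$ cars must fill the spots of $\{1,\dots,j-1\}$ not occupied by $u$. Decomposing by $t$ and $u$, and writing $S_u\subseteq\{1,\dots,j-1\}$ for the set occupied by $u$ and $N_S(m)$ for the number of words in $\{1,\dots,j\}^{m}$ filling $\{1,\dots,j-1\}\setminus S$ when $S$ and spot $j$ are pre-occupied, one obtains $A_j(\ell)=\sum_{t=0}^{j-1}\sum_{|u|=t\text{ non-overflowing}}N_{S_u}(\ell-t-1)$. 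Each $N_S(m)$ has leading term $j^{m}$, and the number of non-overflowing words of length $t$ on a street of length $j-1$ is $(j-t)j^{t-1}$ by Proposition~\ref{prop:partial}, so the coefficient of $j^{\ell}$ in $A_j(\ell)$ is $j^{-1}\big(1+\sum_{t=1}^{j-1}\tfrac{j-t}{j}\big)=\tfrac{j+1}{2j}$; this already produces the conjectured main term.

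\emph{Main obstacle.} What remains, and is the crux, is to control the lower-order part. One has $N_S(m)=j^{m}-\sum_{r=1}^{j-1}(\text{polynomial in }m)\,r^{m}$, where the degree of the polynomial attached to $r^{m}$ is governed by the gap structure of $S$ (for $S$ of size $t$ whose complement is a single interval reaching $j-1$ it is $j-2-t$, but other $S$ contribute $(j-1)^{m}$ and further terms as well). Statement $(\star)$ is equivalent to: after summing over all non-overflowing prefixes, the coefficient of $r^{\ell}$ in $A_j(\ell)$ is a scalar multiple of the \emph{single} falling factorial $(\ell)_{j-1-r}$ for each $r<j$; by the master identity this is in turn equivalent to the $r^{\ell}$-part of $A_j(\ell)$ contributing nothing to the interior sum. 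I would attempt this either by induction on $j$, peeling off the leftmost gap of $S$ and relating $N_S$ to the analogous quantities for a shorter street, or by recasting the prefix decomposition as a generating-function identity in $\ell$ and verifying directly that $\sum_{\ell}A_j(\ell)x^{\ell}/\ell!$ has the exponential–polynomial form $\tfrac{j+1}{2j}e^{jx}+\sum_{r=1}^{j-1}d_{j,r}x^{\,j-1-r}e^{rx}$. Establishing this control uniformly in $j$ (and with it $r_j\neq0$) is where I expect the real difficulty to lie; once it is in place, everything else above is routine.
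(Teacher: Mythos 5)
The statement you are proving is left as a \emph{conjecture} in the paper; the authors only verify the pattern for $j\le 5$ by explicit case analysis, so there is no proof of record to compare against. Your proposal is a genuine and well-structured attack: the reduction of the column sum to
$\sum_{\ell\ge j}\binom{n}{\ell}A_j(\ell)(\ell-j+1)(n-j+1)^{n-\ell-1}$
via the low/high split is correct (the interleaving argument via Proposition~\ref{prop:partial} and rearrangement-invariance is sound, and it faithfully generalizes the proof of Theorem~\ref{thm:spot2}), the master summation identity is correct as stated (the bracket $r(n-m)+(m-j+1)(n+r-j+1)$ does factor as $(n-j+1)(r+m-j+1)$), and the whole scheme is consistent with the known formulas: e.g.\ for $j=3$ it forces $A_3(\ell)=\tfrac23\,3^\ell-\tfrac34\,2^\ell-\ell$, which matches $A_3(3)=9$ from Table~\ref{tab:columns}.

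However, this is not a proof, and you correctly identify where it breaks: everything hinges on the structural claim $(\star)$ that the $r^\ell$-component of $A_j(\ell)$ for each $r<j$ is a scalar multiple of the \emph{single} falling factorial $(\ell)_{j-1-r}$, rather than a general polynomial of degree $j-1-r$. This is not a cosmetic normalization --- if $p_r(\ell)$ contained any lower falling factorial $(\ell)_m$ with $m<j-1-r$, the master identity would produce a surviving term proportional to $(n+r-j+1)^{n-m-1}$, an exponential with a base strictly between $n-j+1$ and $n+1$, which cannot be absorbed into the conjectured two-term form at all. So $(\star)$ is essentially equivalent to the conjecture, not merely a convenient lemma, and neither of your two suggested routes (induction on the gap structure of $S$, or showing $\sum_\ell A_j(\ell)x^\ell/\ell!$ has the exponential--polynomial form $\tfrac{j+1}{2j}e^{jx}+\sum_{r<j}d_{j,r}x^{j-1-r}e^{rx}$) is carried out. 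The secondary claim that the leading coefficient $-\sum_{\ell=0}^{j-2}\tfrac{j-1-\ell}{\ell!}\widehat A_j(\ell)$ is nonzero for all $j$ is likewise asserted (``a direct check'') but not proved, and it is needed both for $\deg f_j=j-2$ and for $r_j\ne 0$. What you have is a valuable reformulation of the conjecture as a concrete statement about words in $\{1,\dots,j\}^\ell$, plus a verified computation of the main term $\tfrac{j+1}{2j}$; the conjecture itself remains open.
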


Looking back at Table~\ref{tab:columns}, one might also notice that the diagonal terms (i.e.,\ the $n$-th column sums) are well-behaved. Indeed, by Lemma~\ref{lem:border}, we have the following:

\begin{observation}
The number of parking functions of $1,\ldots,n$ for which the $n$-th spot is lucky is
\[
\sum_{i=1}^nq_n(i,n)=\sum_{i=1}^nn^{n-2}=n^{n-1}.
\]
\end{observation}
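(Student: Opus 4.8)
The plan is to read this off directly from the \emph{Right} case of Lemma~\ref{lem:border}, which asserts that $q_n(i,n)=n^{n-2}$ for every $i\in[n]$, independently of $i$. Granting that, the $n$-th column sum is simply $n$ copies of the same quantity, so
$\sum_{i=1}^n q_n(i,n) = n\cdot n^{n-2} = n^{n-1}$,
and there is nothing further to do. This is exactly the computation already displayed in the statement, so the entire content of the proof is the citation plus one line of arithmetic.

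For completeness I would also indicate the self-contained argument behind the $q_n(i,n)=n^{n-2}$ claim, in case a reader wants it spelled out here rather than chasing Lemma~\ref{lem:border}. A parking function counted by $q_n(i,n)$ is one in which car $i$ prefers spot $n$. The key point is that in any \emph{valid} parking function, a car preferring spot $n$ is automatically lucky: the only spot at or after $n$ is $n$ itself, so such a car either parks in $n$ or drives away, and the latter is impossible for a parking function. Hence the ``lucky'' requirement imposes no extra restriction. Moreover at most one car can prefer $n$ (two such cars could not both park), so every other car has preference in $\{1,\ldots,n-1\}$; and since spot $n$ is occupied by car $i$, deleting car $i$ leaves the remaining $n-1$ cars parking, in their original relative order, entirely within spots $1,\ldots,n-1$. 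Thus deletion of car $i$ is a bijection onto parking functions of $1,\ldots,n-1$, of which there are $n^{n-2}$ by the standard count (the $L=U=\emptyset$ case of Theorem~\ref{thm:pollak}, or Proposition~\ref{prop:partial} with $s=t=n-1$). Since this holds for each of the $n$ positions $i$, summing yields $n^{n-1}$.

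There is no genuine obstacle here; the one place warranting a word of care is the assertion that a car preferring spot $n$ is always lucky, which I would state explicitly before invoking it, and the edge case $n=1$ (consistent, since $q_1(1,1)=1=1^{-1}$ counts the unique parking function). Everything else is the arithmetic identity $n\cdot n^{n-2}=n^{n-1}$.
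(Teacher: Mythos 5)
Your proposal is correct and follows essentially the same route as the paper: the observation is stated as an immediate consequence of the \emph{Right} case of Lemma~\ref{lem:border}, and your supplementary justification of $q_n(i,n)=n^{n-2}$ (a car preferring spot $n$ is forced to be lucky, at most one car can prefer $n$, and deleting it leaves a parking function of $1,\ldots,n-1$) is the same argument the paper gives in its proof of that case. Nothing is missing.
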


Thus, the probability that the \emph{last} spot is occupied by a car which prefers the last spot is asymptotically $e^{-1}=\texttt{0.367879\ldots}$. By comparison, the probability that the last car parks in its preferred spot is  $\frac2{n+1}\to 0$.

\begin{question}
What can be said about the $(n-1)$-st column sum, $\sum_iq_n(i,n-1)$? Or more generally, the $(n-j)$-th column sum, $\sum_iq_n(i,n-j)$? 
\end{question}
For reference the $(n-1)$-st column sums (the sub-diagonal in Table~\ref{tab:columns}) starts with the following values:
\[
3,~ 11,~ 74,~ 708,~ 8733,~ 131632,~ 2342820,~ 48068672,~ 1116809255,~ \ldots.
\]
The authors were not able to find expressions for these sums. Moreover, this sequence (along with columns $\ge 2$ in Table~\ref{tab:columns}) has not previously appeared in the OEIS \cite{oeis}.
Thus, they have added these sequences to OEIS \cite{oeis}: \texttt{A372842}, \texttt{A372843}, \texttt{A372844}, \texttt{A372845}, \texttt{A374533}, and \texttt{A374756}. 
Moreover, the row sums indicate the number of lucky cars across all parking functions of length $n$, and this yields another new sequence (now \texttt{A375616} in OEIS \cite{oeis}) whose formula is in terms of other sequences in OEIS.
There is still much to be found with regards to lucky spots.

\section{Weakly-increasing and weakly-decreasing}\label{sec:weakly}
We now look at the special cases when the preference of cars are either in weakly-increasing or weakly-decreasing order.  It is known that the number of such parking functions in these cases is the $n$-th Catalan number $C_n=\frac1{n+1}{2n\choose n}$ (see \cite{stanley, cyan}). This can be seen by using the classical bijection between such parking functions and Dyck paths which we describe below.

\begin{definition}
A \emph{Dyck path} of size $n$ is a path going from $(0,0)$ to $(n,n)$ consisting of unit north and unit east steps never going below the line $y=x$.
\end{definition}

For the bijection between Dyck paths and weakly-increasing parking functions, start with a Dyck path, and label the \emph{columns (spots)} from left to right with $1,\ldots,n$. Similarly, label the \emph{rows (cars)} from bottom to top with $1,\ldots,n$. To find the preference of the $i$-th car, go from the right hand side of the row labeled $i$ moving to the left until you hit the Dyck path and go down to read the column label. For example, in Figure~\ref{fig:incdec}(a) we go from a Dyck path of size $n=8$ to the weakly-increasing parking function $(1,1,2,2,2,6,7,7)$.

For the bijection of weakly-decreasing parking functions, the only difference is that the rows (cars) are labeled from top to bottom with $1,\ldots,n$. For example, in Figure~\ref{fig:incdec}(b) we go from a Dyck path of size $n=8$ to the weakly-decreasing parking function $(7,7,6,2,2,2,1,1)$.

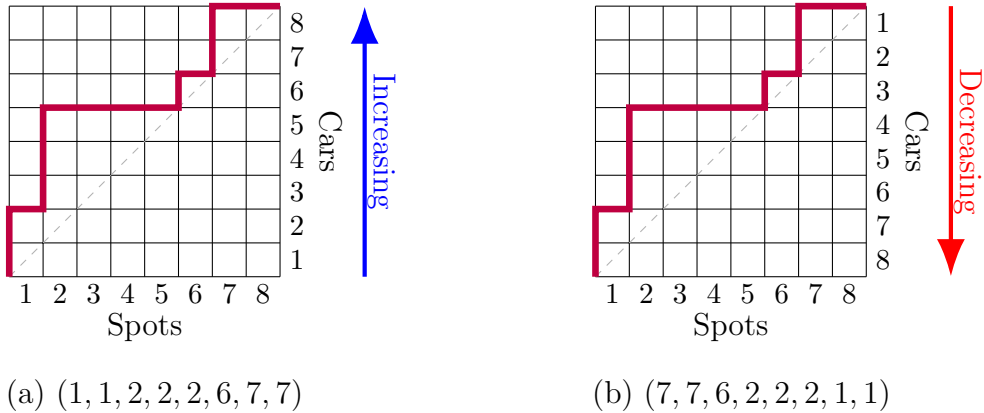
\begin{figure}[htb]\centering
\begin{tabular}{l@{\qquad\qquad\qquad}l}
\begin{tikzpicture}[scale=0.45]
\foreach \x in {0,...,8}
    \draw (\x, 0)--(\x,8) (0,\x)--(8,\x);

\draw[dashed,color=white!70!black] (0, 0)--(8, 8);

\node[] (x-axis-label) at (4, -1.5) {Spots};
\node[rotate=-90] (y-axis-label) at (9.5, 4) {Cars};

\foreach \x in {1,...,8} {
    \node at ({\x - .5}, -.5) {\small$\x$};
    \node at (8.5, {\x-0.5}) {\small$\x$};
    }

\draw [color=blue,ultra thick,-{Latex[length=5mm]}] (10.5,0)--(10.5,8);

\node[rotate=-90,color=blue] (arrow-label) at (11, 4) {Increasing};

        \draw[line width=2.5pt,color=purple] (0, 0)--(0, 2)--(1, 2)--(1, 5)--(5, 5)--(5, 6)--(6, 6)--(6, 8)--(8, 8);
    \end{tikzpicture}
    &
\begin{tikzpicture}[scale=0.45]
\foreach \x in {0,...,8}
    \draw (\x, 0)--(\x,8) (0,\x)--(8,\x);

\draw[dashed,color=white!70!black] (0, 0)--(8, 8);

\node[] (x-axis-label) at (4, -1.5) {Spots};
\node[rotate=-90] (y-axis-label) at (9.5, 4) {Cars};

\foreach \x in {1,...,8} {
    \node at ({\x - .5}, -.5) {\small$\x$};
    \node at (8.5, {8.5-\x}) {\small$\x$};
    }

\draw [color=red,ultra thick,-{Latex[length=5mm]}] (10.5,8)--(10.5,0);

\node[rotate=-90,color=red] (arrow-label) at (11, 4) {Decreasing};

\draw[line width=2.5pt,color=purple] (0, 0)--(0, 2)--(1, 2)--(1, 5)--(5, 5)--(5, 6)--(6, 6)--(6, 8)--(8, 8);
\end{tikzpicture}

\\[5pt]
(a) $(1,1,2,2,2,6,7,7)$&(b) $(7,7,6,2,2,2,1,1)$
\end{tabular}
\caption{Bijection between a Dyck path and a (a) weakly-increasing parking function, and a (b) weakly-decreasing parking function.}
\label{fig:incdec}
\end{figure}

For this discussion, we modify the notation for the previously introduced parameters.

\begin{definition}
Given $1\le i,j\le n$, let $q_n^{\textrm{i}}(i,j)$ (similarly, $q_n^{\textrm{d}}(i,j)$) denote the number of increasing (similarly, decreasing) parking functions of $1,\ldots,n$ where the $i$-th car prefers the $j$-th spot and the $i$-th car is lucky.
\end{definition}

\subsection{Weakly-increasing}
When the cars come in weakly-increasing order, the spots in the street fill up in order from $1$ through $n$. This means that the $i$-th car to enter parks in spot $i$. In particular, the $i$-th car is lucky if and only if it prefers spot $i$. That is, for weakly-increasing parking functions, the $i$-th car is lucky if and only if the $i$-th spot is lucky.

In terms of Dyck paths, this corresponds to when the Dyck path comes up off of the line $y=x$ on the left-hand side of the column labelled $i$. In particular, the Dyck path must be a concatenation of two Dyck paths: one for the first $i-1$ columns (which can be done in $C_{i-1}$ ways) and one for the remaining $n-i+1$ columns (which can be done in $C_{n-i+1}$ ways). This establishes the following.

\begin{lemma}
We have that
\[
q_n^{\text{\emph{i}}}(i,j)=\begin{cases} 
C_{i-1}C_{n-i+1}&\text{if }i=j,\\
0&\text{else.}
\end{cases}
\]
In particular, the number of weakly-increasing parking functions of $1,\ldots,n$ for which the $i$-th car (spot) is lucky is $C_{i-1}C_{n-i+1}$.
\end{lemma}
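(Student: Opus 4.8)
The plan is to formalize the observation sketched just above: under the Dyck-path bijection, weak increase forces the $i$-th car to park in spot $i$, so it is lucky precisely when it prefers spot $i$, and this latter condition is a statement about where the path meets the diagonal. First I would dispose of the off-diagonal case, which is immediate: since the spots fill from left to right, the $i$-th car always parks in spot $i$, so if it is lucky it must have preferred spot $i$; hence $q_n^{\textrm{i}}(i,j)=0$ whenever $j\neq i$.

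For $j=i$, I would translate the event ``the $i$-th car prefers spot $i$'' through the bijection. Unwinding the rule for reading off preferences, the preference of the $i$-th car equals $c_i+1$, where $c_i$ is the $x$-coordinate of the unique north step of the Dyck path that rises from height $i-1$ to height $i$. Thus the $i$-th car prefers spot $i$ exactly when that step is the segment from $(i-1,i-1)$ to $(i-1,i)$, that is, exactly when the path passes through the lattice point $(i-1,i-1)$. (A Dyck path of size $n$ that visits $(i-1,i-1)$ with $i\le n$ must leave it by a north step, since an east step would go below $y=x$, so every diagonal contact of this form genuinely produces a lucky car.)

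It then remains to count Dyck paths of size $n$ passing through $(i-1,i-1)$. Splitting such a path at that point expresses it uniquely as a Dyck path from $(0,0)$ to $(i-1,i-1)$ --- a Dyck path of size $i-1$, counted by $C_{i-1}$ --- followed by a lattice path from $(i-1,i-1)$ to $(n,n)$ that stays weakly below $y=x$, which after translation is a Dyck path of size $n-i+1$, counted by $C_{n-i+1}$. The two pieces are chosen independently, so $q_n^{\textrm{i}}(i,i)=C_{i-1}C_{n-i+1}$. The final assertion is then a one-liner: the number of weakly-increasing parking functions for which the $i$-th car is lucky is $\sum_{j=1}^n q_n^{\textrm{i}}(i,j)=q_n^{\textrm{i}}(i,i)=C_{i-1}C_{n-i+1}$, and, as noted, in the weakly-increasing case the $i$-th car is lucky if and only if the $i$-th spot is lucky, so the same count holds for spots.

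I do not anticipate any real difficulty here; the only point that needs care is matching the preference-reading convention of the bijection to the lattice-point description --- getting the index shift right so that preferring spot $i$ corresponds to visiting $(i-1,i-1)$ --- after which the argument is just a routine decomposition of Dyck paths at a prescribed diagonal point.
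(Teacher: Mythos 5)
Your proposal is correct and follows essentially the same route as the paper: observe that under weak increase the $i$-th car always parks in spot $i$ (killing the off-diagonal case), translate ``car $i$ prefers spot $i$'' into the Dyck path touching the diagonal at $(i-1,i-1)$, and split the path there into two independent Dyck paths counted by $C_{i-1}$ and $C_{n-i+1}$. The only blemish is a slip where you say the second piece stays ``weakly below'' $y=x$ --- with the paper's convention it stays on or above the diagonal, as your own remark about the forced north step already assumes.
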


\begin{corollary}
The expected number of lucky cars/spots for a random weakly-increasing parking function is $\frac{3n}{n+2}$.
\end{corollary}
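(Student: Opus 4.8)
The plan is to use the lemma together with linearity of expectation and the standard Catalan convolution identity. Since there are $C_n$ weakly-increasing parking functions of $1,\dots,n$ in total, and the lemma says the number of these for which the $i$-th car (equivalently, the $i$-th spot) is lucky is $C_{i-1}C_{n-i+1}$, the expected number of lucky cars is
\[
E=\frac{1}{C_n}\sum_{i=1}^{n}C_{i-1}C_{n-i+1}.
\]
(Equivalently, by linearity of expectation, $E$ is the sum over $i$ of the probabilities $C_{i-1}C_{n-i+1}/C_n$ that the $i$-th car is lucky.)

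Next I would reindex with $k=i-1$, so the numerator becomes $\sum_{k=0}^{n-1}C_k C_{n-k}$, and invoke the Catalan convolution $\sum_{k=0}^{m}C_k C_{m-k}=C_{m+1}$ (with $m=n$). Peeling off the $k=n$ term ($C_nC_0=C_n$) gives $\sum_{k=0}^{n-1}C_k C_{n-k}=C_{n+1}-C_n$, hence
\[
E=\frac{C_{n+1}-C_n}{C_n}=\frac{C_{n+1}}{C_n}-1.
\]

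Finally I would compute the Catalan ratio directly from $C_n=\frac{1}{n+1}\binom{2n}{n}$: a short cancellation gives $\dfrac{C_{n+1}}{C_n}=\dfrac{(2n+1)(2n+2)}{(n+1)(n+2)}=\dfrac{2(2n+1)}{n+2}$, so
\[
E=\frac{2(2n+1)}{n+2}-1=\frac{4n+2-(n+2)}{n+2}=\frac{3n}{n+2},
\]
as claimed. There is no real obstacle here; the only non-bookkeeping ingredient is recognizing that the sum $\sum_i C_{i-1}C_{n-i+1}$ is a Catalan convolution (missing one boundary term), and everything else is a routine simplification. An alternative route, if one prefers to avoid quoting the convolution identity, is to extract the coefficient of $x^{n}$ in $C(x)^2$, where $C(x)=\sum_{m\ge0}C_m x^m=\frac{1-\sqrt{1-4x}}{2x}$ satisfies $xC(x)^2=C(x)-1$, which yields the same evaluation.
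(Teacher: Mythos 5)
Your proof is correct, but it follows a genuinely different route from the paper's. The paper identifies the number of lucky cars/spots with the number of returns of the associated Dyck path to the line $y=x$ and then simply cites Deutsch's known formula $\frac{3n}{n+2}$ for the expected number of returns; it never touches the sum $\sum_i C_{i-1}C_{n-i+1}$ directly. You instead compute that sum from the preceding lemma, recognize it as the Catalan convolution $\sum_{k=0}^{n}C_kC_{n-k}=C_{n+1}$ with the boundary term $C_nC_0$ removed, and simplify the ratio $C_{n+1}/C_n-1$ to $\frac{3n}{n+2}$. Your calculation checks out (e.g.\ for $n=2$ both give $3/2$), and it has the advantage of being self-contained: the only external input is the standard convolution identity, whereas the paper outsources the whole computation to a reference. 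What the paper's approach buys is the conceptual identification of lucky spots with Dyck-path returns, which situates the result among known Dyck-path statistics; what yours buys is an explicit, verifiable derivation from Lemma 4.1 that a reader can follow without consulting Deutsch.
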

\begin{proof}
The expected number of lucky cars/spots is equivalent to the expected number of times a Dyck path comes off of $y=x$, or equivalently, the expected number of times the Dyck path returns to $y=x$. This has previously been computed (see Deutsch \cite{deutsch}) to have expected value $\frac{3n}{n+2}$.
\end{proof}

We note that having the preferences in weakly-increasing order attains the minimum number of lucky cars/spots for a given set of preferences. This can be seen because if the $i$-th car is lucky for a weakly-increasing parking function, then the first $i-1$ cars would have wanted to park in the first $i-1$ spots, while all remaining cars want to park in $i$ or above. In particular, no car which has a preference before \emph{or} after spot $i$ can park in spot $i$. So if a spot is lucky in the weakly-increasing order, it will be lucky in any ordering. In other words, for weakly-increasing parking functions, the only spots that are lucky are those which must always be lucky.

\subsection{Weakly-decreasing}
When the cars come in weakly-decreasing order, then a car is lucky if it parks in a previously unpreferred spot. This can be seen by noting that if a car prefers a previously unpreferred spot, then all preceding cars must have parked further down the street and thus the desired spot is available. In particular, every spot which has at least one car preferring that spot is lucky. This means that the weakly-decreasing order attains the maximum number of lucky cars/spots.

In terms of Dyck paths, the $i$-th car will prefer the $j$-th and be lucky if and only if the Dyck path contains the sub-path shown in Figure~\ref{fig:dlucky}. In terms of coordinates of the lattice, this corresponds to the following pair of edges being present in the path:
\begin{equation}\label{eq:dyck}
(j-1,n-i)\to(j-1,n-i+1)\to(j,n-i+1).
\end{equation}
In particular, the number of lucky cars/spots corresponds with the number of peaks in the Dyck path.

\begin{figure}[htb]
\centering
\begin{tikzpicture}[scale=0.45]
\foreach \x in {0,...,6}
    \draw (\x, 0)--(\x,6) (0,\x)--(6,\x);

\draw[dashed,color=white!70!black] (0, 0)--(6, 6);

\node at (2.5,-.5){\small$j$};
\node at (6.5,4.5){\small$i$};

\draw[line width=2.5pt,color=purple] (2, 4)--(2, 5)--(3, 5);

\draw[line width=3.5pt, color=blue!50!white,opacity=0.25] (2.5,0)--(2.5,4.5)--(6,4.5);
\end{tikzpicture}

\caption{Sub-path needed in the Dyck path for a weakly-decreasing parking function having the $i$-th car prefer the $j$-th spot and be lucky.}
\label{fig:dlucky}
\end{figure}
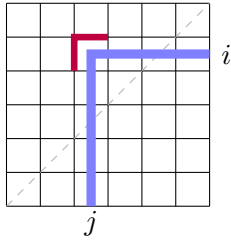

\begin{remark}
eThe Narayana numbers $N(n,k)=\frac1k{n-1 \choose k-1}{n \choose k-1}$ (see \texttt{A001263} in the OEIS \cite{oeis}) count the number of Dyck paths from $(0,0)$ to $(n,n)$ with exactly $k$ peaks. Equivalently, the Narayana numbers are the number of weakly-decreasing parking functions of $1,\ldots,n$ with exactly $k$ lucky cars/spots.
\end{remark}

We now work to find $q_n^{\text{d}}(i,j)$, and we start by counting particular lattice paths.

\begin{lemma}\label{lem:w_dec_lucky_car_count}
For $\ell\ge k$, the number of lattice paths from $(0,0)$ to $(k,\ell)$ which stay on or above the line $y=x$ is $\frac{\ell - k + 1}{\ell + 1}{k + \ell \choose k} $.
\end{lemma}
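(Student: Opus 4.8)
The plan is to use the reflection principle (André's reflection argument), exactly the method underlying the classical ballot problem. First I observe that the total number of unrestricted lattice paths from $(0,0)$ to $(k,\ell)$ using unit east and north steps is $\binom{k+\ell}{k}$, since such a path is determined by which $k$ of its $k+\ell$ steps are east. It therefore suffices to count the ``bad'' paths, those that go strictly below the line $y=x$ at some point, and subtract.

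For a bad path, consider the first lattice point it reaches with $y<x$. A north step can never make $y$ drop below $x$, so this first offending point is reached by an east step from a point on the diagonal $y=x$; hence it lies on the line $y=x-1$. I then reflect the portion of the path from $(0,0)$ up to and including this first point across the line $y=x-1$ (swapping east and north steps in that initial portion), leaving the remainder of the path unchanged. Since the reflection of $(0,0)$ across $y=x-1$ is $(1,-1)$, this produces a lattice path from $(1,-1)$ to $(k,\ell)$. Conversely, any lattice path from $(1,-1)$ to $(k,\ell)$ must meet the line $y=x-1$ (it starts below that line and, because $\ell\ge k$, ends on or above it), and reflecting back its initial segment up to the first such meeting recovers a bad path. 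This is a bijection, so the number of bad paths equals the number of unrestricted lattice paths from $(1,-1)$ to $(k,\ell)$, which use $k-1$ east steps and $\ell+1$ north steps, and hence number $\binom{k+\ell}{k-1}$.

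Consequently the desired count is $\binom{k+\ell}{k}-\binom{k+\ell}{k-1}$, and it only remains to simplify. Using $\binom{k+\ell}{k-1}=\frac{k}{\ell+1}\binom{k+\ell}{k}$, I get $\binom{k+\ell}{k}\bigl(1-\frac{k}{\ell+1}\bigr)=\frac{\ell-k+1}{\ell+1}\binom{k+\ell}{k}$, as claimed.

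As for obstacles, there is no real difficulty here — this is a textbook ballot-type identity — and the only point requiring care is confirming that the first descent below the diagonal always lands on the line $y=x-1$ (so that the reflection is taken across the correct line) and that the reflection map is genuinely a bijection onto all lattice paths from $(1,-1)$ to $(k,\ell)$. One could instead argue by induction on $k+\ell$, checking that $\frac{\ell-k+1}{\ell+1}\binom{k+\ell}{k}$ satisfies the recurrence $B(k,\ell)=B(k-1,\ell)+B(k,\ell-1)$ together with the boundary values $B(0,\ell)=1$ and $B(k,k-1)=0$; but the reflection argument is shorter and more transparent, so that is the route I would write up.
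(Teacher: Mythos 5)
Your proposal is correct and uses the same reflection-principle argument as the paper; the only cosmetic difference is that you reflect the initial segment of a bad path (obtaining paths from $(1,-1)$ to $(k,\ell)$) while the paper reflects the final segment (obtaining paths from $(0,0)$ to $(\ell+1,k-1)$), and both give the count $\binom{k+\ell}{k-1}$ and the same simplification.
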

\begin{proof}
We follow a classic sub-path flipping argument for Dyck paths. We start by noting the total number of paths from $(0,0)$ to $(k,\ell)$ is ${k+\ell \choose k}$, e.g.,\ there are $k$ east steps and $\ell$ north steps, and thus, we only need to decide which of the $k+\ell$ steps are east steps. 

We now need to remove any paths that go below the line $y=x$. To count these, we note that there is a \emph{first} such time that the path hits the line $y=x-1$; then, we flip the remainder of the path across the line $y=x-1$ to form a path that goes from $(0,0)$ to $(\ell+1,k-1)$. Note that this can also be reversed for any path from $(0,0)$ to $(\ell+1,k-1)$. In particular, there are ${(\ell+1)+(k-1)\choose k-1}={k+\ell\choose k-1}$ such paths.

Thus, the number of paths is
\[
{k+\ell \choose k}-{k+\ell\choose k-1}=
\bigg(1-\frac{k}{\ell+1}\bigg){k+\ell \choose k}=
\frac{\ell - k + 1}{\ell + 1}{k + \ell \choose k} .\qedhere
\]
\end{proof}

\begin{lemma}\label{lem:car_i_lucky}
The number of weakly-decreasing parking functions where the $i$-th car prefers the $j$-th spot and is lucky is
\begin{equation}\label{eq:qd}
q_n^{\text{\emph{d}}}(i,j)=
\begin{cases}
\displaystyle\frac{(n - i - j + 2)^2}{(n - i + 1)(n - j + 1)}{n - i + j - 1 \choose j - 1} {n - j + i - 1 \choose i - 1} &\text{if }i+j\le n+1,\\[10pt]
0&\text{else.}
\end{cases}
\end{equation}
\end{lemma}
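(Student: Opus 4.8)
The plan is to use the Dyck-path description of weakly-decreasing parking functions developed just above. By the discussion around \eqref{eq:dyck}, $q_n^{\text{d}}(i,j)$ counts the Dyck paths from $(0,0)$ to $(n,n)$ (never going below $y=x$) that contain the two edges $(j-1,n-i)\to(j-1,n-i+1)\to(j,n-i+1)$. If $i+j\ge n+2$, then $n-i\le j-2<j-1$, so the lattice point $(j-1,n-i)$ lies strictly below the line $y=x$ and no Dyck path passes through it; hence $q_n^{\text{d}}(i,j)=0$, giving the second case of \eqref{eq:qd}. So from now on I assume $i+j\le n+1$.

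Next I would cut each such Dyck path at the forced peak. The two peak edges are fixed, so the path is determined by an initial lattice path from $(0,0)$ to $(j-1,n-i)$ together with a final lattice path from $(j,n-i+1)$ to $(n,n)$. Since the whole path stays on or above $y=x$, so does each piece; conversely, gluing any two such pieces through the peak yields a Dyck path, where one uses $i+j\le n+1$ to ensure that the intermediate points $(j-1,n-i)$, $(j-1,n-i+1)$, $(j,n-i+1)$ all lie on or above $y=x$. Therefore $q_n^{\text{d}}(i,j)$ is the product of the number of choices for the initial piece and for the final piece. The initial piece is counted directly by Lemma~\ref{lem:w_dec_lucky_car_count} with $k=j-1$ and $\ell=n-i$ (the hypothesis $\ell\ge k$ being exactly $i+j\le n+1$), which gives $\frac{n-i-j+2}{n-i+1}\binom{n-i+j-1}{j-1}$.

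For the final piece, which is not anchored at the origin, I would reverse it and then apply the involution $\phi(x,y)=(n-y,n-x)$. This sends $(n,n)\mapsto(0,0)$ and $(j,n-i+1)\mapsto(i-1,n-j)$, turns the (reversed) south and west steps into east and north steps respectively, and preserves the relation $y\ge x$. Hence the number of choices for the final piece equals the number of lattice paths from $(0,0)$ to $(i-1,n-j)$ staying on or above $y=x$, which by Lemma~\ref{lem:w_dec_lucky_car_count} with $k=i-1$, $\ell=n-j$ is $\frac{n-j-i+2}{n-j+1}\binom{n-j+i-1}{i-1}$. Multiplying the two counts and using $n-i-j+2=n-j-i+2$ to combine the numerators into a square yields exactly the expression in \eqref{eq:qd}.

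The main obstacle is this last step: the final segment, expressed in its own coordinates, is constrained to stay above the shifted line $y=x-(n+1-i-j)$ rather than $y=x$, so Lemma~\ref{lem:w_dec_lucky_car_count} does not apply verbatim. The reverse-and-reflect symmetry $\phi$ is what realigns the problem with the diagonal, and the one point that needs care is checking that $\phi$ genuinely preserves the ``on or above $y=x$'' condition and correctly interchanges the two step types; this is short but must be done explicitly. Everything else is a routine binomial simplification.
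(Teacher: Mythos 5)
Your argument is correct and is essentially the paper's own proof: split the Dyck path at the forced peak, count the initial segment with Lemma~\ref{lem:w_dec_lucky_car_count}, and handle the final segment by flipping across the anti-diagonal (your map $\phi(x,y)=(n-y,n-x)$ is exactly that flip) so the same lemma applies with $k=i-1$, $\ell=n-j$. Your treatment of the degenerate case $i+j\ge n+2$ and your explicit check that $\phi$ preserves the diagonal constraint are just more detailed versions of what the paper states in one line.
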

\begin{proof}
The condition that $i+j\le n+1$ is needed to ensure the peak occurs above the line $y=x$.

We can break our Dyck paths which contribute to $q_n^{\text{d}}(i,j)$ into three parts, namely from $(0,0)$ up to $(j-1,n-i)$, then the two steps in \eqref{eq:dyck}, and finally from $(j,n-i+1)$ to $(n,n)$. By symmetry (e.g.,\ flipping across the ``anti-diagonal''), this last portion is equivalent to counting the walks that stay above $y=x$ from $(0,0)$ to $(i-1,n-j)$. Now, apply Lemma~\ref{lem:w_dec_lucky_car_count} to find the counts for the first and last portions of the Dyck path; multiplying then gives the result.
\end{proof}

\begin{table}[htb]\centering
\[
\begin{array}{|c|@{\quad}c@{\quad}c@{\quad}c@{\quad}c@{\quad}c@{\quad}c@{\quad}c|} \hline
q_7^{\text{d}}(i,j)&j = 1&j = 2&j = 3&j = 4&j = 5&j = 6&j = 7\\ \hline &&&&&&&\\[-10pt]
i=1&  1&   6&  20&  48&  90& 132& 132\\
i=2&  6&  25&  56&  84&  84&  42&   0\\
i=3& 20&  56&  81&  70&  28&   0&   0\\
i=4& 48&  84&  70&  25&   0&   0&   0\\
i=5& 90&  84&  28&   0&   0&   0&   0\\
i=6&132&  42&   0&   0&   0&   0&   0\\
i=7&132&   0&   0&   0&   0&   0&   0\\ \hline
\end{array}
\]
\caption{The number of weakly-decreasing parking functions of $1,\ldots,7$,  where the $i$-th car prefers \emph{and} parks in the $j$-th spot, this is denoted as $q_7^{\text{d}}(i,j)$. The row sums correspond with when the $i$-th car is lucky; the column sums correspond with when the $j$-th spot is lucky.}
\label{tab:7decreasing}
\end{table}

Examining the data in Table~\ref{tab:7decreasing} for $q_7^{\text{d}}(i,j)$, we observe symmetry in the entries. This can be seen directly by examining \eqref{eq:qd} which is symmetric in its use of $i$ and $j$. This can also be seen visually by flipping any Dyck path across the ``anti-diagonal'' and seeing where the peaks go. As a consequence of the symmetry of $q_n^{\text{d}}(i,j)$, we have that the number of times the $i$-th car is lucky is equal to the number of times that the $i$-th spot is lucky.

\begin{corollary}
The number of weakly-decreasing parking functions of length $n$ where the $j$-th spot (car) is lucky is
\begin{equation}\label{eq:sumq}
\sum_{i=1}^nq_n^{\text{\emph{d}}}(i,j)=\sum_{i=1}^{n+1-j}\frac{(n - i - j + 2)^2}{(n - i + 1)(n - j + 1)}{n - i + j - 1 \choose j - 1} {n - j + i - 1 \choose i - 1}.
\end{equation}
\end{corollary}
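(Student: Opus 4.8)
The corollary is essentially immediate from Lemma~\ref{lem:car_i_lucky} together with the symmetry observation, so the ``proof'' I would write is short and mostly bookkeeping. The plan is as follows. First I would invoke the symmetry of $q_n^{\text{d}}(i,j)$ in $i$ and $j$: this is visible directly from formula~\eqref{eq:qd}, since swapping $i\leftrightarrow j$ leaves $(n-i-j+2)^2$ unchanged, swaps the two factors $(n-i+1)$ and $(n-j+1)$ in the denominator, and swaps the two binomial coefficients ${n-i+j-1\choose j-1}$ and ${n-j+i-1\choose i-1}$; the condition $i+j\le n+1$ is also symmetric. Hence $q_n^{\text{d}}(i,j)=q_n^{\text{d}}(j,i)$, which in particular means the number of weakly-decreasing parking functions in which the $j$-th spot is lucky, namely $\sum_i q_n^{\text{d}}(i,j)$, equals the number in which the $j$-th car is lucky, namely $\sum_i q_n^{\text{d}}(j,i)$ — so the phrase ``$j$-th spot (car)'' is justified.

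Second, I would substitute the closed form from Lemma~\ref{lem:car_i_lucky} into the sum $\sum_{i=1}^n q_n^{\text{d}}(i,j)$. The only subtlety is the range of summation: by the case split in~\eqref{eq:qd}, the summand vanishes unless $i+j\le n+1$, i.e.\ $i\le n+1-j$, so the sum over $i\in\{1,\dots,n\}$ collapses to a sum over $i\in\{1,\dots,n+1-j\}$. Plugging in the nonzero expression for each such $i$ then yields exactly the right-hand side of~\eqref{eq:sumq}. That is the entire argument.

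There is no real obstacle here — the statement is a packaging of earlier results rather than a new computation — so the ``hard part,'' such as it is, is simply being careful about two points: (i) checking the symmetry of~\eqref{eq:qd} explicitly rather than just asserting it, since it is the content that lets one write ``spot (car)'' and is the thing a referee would want spelled out; and (ii) correctly truncating the summation range using the $i+j\le n+1$ condition, so that the displayed sum in~\eqref{eq:sumq} has the upper limit $n+1-j$ rather than $n$. I would also remark in passing (as the surrounding text already does) that this gives an alternative, manifestly terminating, expression for the column sums of Table~\ref{tab:7decreasing}, but I would not attempt to simplify~\eqref{eq:sumq} into anything more closed-form, since the paper does not claim such a simplification.
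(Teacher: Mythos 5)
Your proposal is correct and matches the paper's (implicit) argument exactly: the paper states this corollary without proof, relying on the symmetry of $q_n^{\text{d}}(i,j)$ noted in the preceding paragraph and on the vanishing of the summand when $i+j>n+1$ to truncate the sum at $i=n+1-j$. Your write-up simply makes those two bookkeeping steps explicit.
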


While \eqref{eq:sumq} gives an expression for the number of times the $j$-th spot (car) is lucky, it is somewhat unwieldy. By approaching the problem of counting from a different direction, we find a more convenient expression (see \eqref{eq:sumq2} below). These expressions are thus equal; the authors do not know a direct algebraic proof that establishes their equality.

\begin{theorem}\label{thm:weak}
The number of weakly-decreasing parking functions of length $n$ where the $j$-th spot (car) is lucky is
\begin{equation}\label{eq:sumq2}
\sum_{k=0}^{n-j}C_{n-1-k}C_k=\sum_{k=0}^{n-j}\frac1{(n-k)(k+1)}{2(n-1-k)\choose n-1-k}{2k\choose k}.
\end{equation}
\end{theorem}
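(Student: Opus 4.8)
The plan is to count weakly-decreasing parking functions of length $n$ in which the $j$-th spot is lucky by classifying them according to where the Dyck path \emph{first returns to the diagonal} after position $j$. Recall from the discussion preceding Lemma~\ref{lem:car_i_lucky} that the $j$-th spot is lucky precisely when the Dyck path contains a peak whose east step crosses the vertical line $x=j-1$ to $x=j$; equivalently, there is an east step from $(j-1,h)$ to $(j,h)$ immediately following a north step, for some height $h\ge j$. The key structural observation I would exploit is this: the $j$-th spot is lucky if and only if the Dyck path, read as a sequence of north/east steps, has a peak straddling the line $x=j$. I would then split the Dyck path at the point $(j-1,\cdot)$ just before that peak and at the matching point on the diagonal, decomposing the path into a piece on columns $1,\dots,j-1$ (really a shifted Dyck structure) and a piece on the remaining columns.

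Concretely, first I would observe that a Dyck path from $(0,0)$ to $(n,n)$ has a peak straddling $x=j$ if and only if, writing the path's heights, the path makes a north step ending at height $h$ with $x$-coordinate $j-1$ and then an east step. I would index such a path by $k$, the number of east steps strictly to the right of that peak's east step (so $0\le k\le n-j$); equivalently $k$ records how far from the top-right corner the straddling peak sits. Having fixed $k$, the portion of the path after the peak — from $(j,h)$ onward — together with the constraint of staying weakly above $y=x$ decomposes, by the anti-diagonal symmetry used in Lemma~\ref{lem:car_i_lucky}, into an independent Dyck path of size $k$: this contributes a factor $C_k$. The portion before and including the peak is a path from $(0,0)$ to $(j-1, n-k)$ that stays weakly above the diagonal and ends with the straddling north step; after removing that final north step it is a path from $(0,0)$ to $(j-1,n-1-k)$ staying weakly above $y=x$, which by the anti-diagonal flip is counted by a Dyck structure of size $n-1-k$, contributing $C_{n-1-k}$. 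Multiplying and summing over $k$ from $0$ to $n-j$ yields $\sum_{k=0}^{n-j}C_{n-1-k}C_k$; expanding each Catalan number as $C_m=\frac1{m+1}\binom{2m}{m}$ gives the second form in \eqref{eq:sumq2}.

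The main obstacle — and the step I would be most careful about — is making the bijection between ``weakly-decreasing parking functions with the $j$-th spot lucky'' and ``pairs (Dyck path of size $n-1-k$, Dyck path of size $k$)'' genuinely well-defined, i.e.\ checking that the straddling peak at $x=j$ is \emph{unique} when it exists (so the index $k$ is unambiguous) and that the two sub-paths can be reassembled without any diagonal-crossing violation. Uniqueness holds because between $x=j-1$ and $x=j$ the path uses exactly one east step, so there is exactly one candidate peak, and it straddles $x=j$ iff it is immediately preceded by a north step — which is exactly the luck condition. The reassembly needs the check that a path from $(0,0)$ to $(j-1,n-1-k)$ staying weakly above $y=x$ is freely concatenable (after inserting the peak's NE steps) with an arbitrary size-$k$ Dyck path on the tail; this is where the anti-diagonal symmetry and the fact that $n-1-k\ge j-1$ (forced by $k\le n-j$) are used to guarantee the intermediate lattice point lies on or above the diagonal. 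Once the bijection is nailed down, the identity \eqref{eq:sumq2} is immediate, and its equality with \eqref{eq:sumq} follows because both count the same objects (no separate algebraic proof is claimed).
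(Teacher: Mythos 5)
There is a genuine gap, and it sits exactly where the difficulty of this theorem lives. You decompose a Dyck path at the peak in column $j$ (the peak $(j-1,h-1)\to(j-1,h)\to(j,h)$) and claim the two pieces are Dyck paths of sizes $n-1-k$ and $k$. They are not: the piece before the peak is a lattice path from $(0,0)$ to $(j-1,h-1)$ staying weakly above $y=x$, and the piece after is a path from $(j,h)$ to $(n,n)$ staying weakly above $y=x$. Neither endpoint lies on the diagonal in general, so these are ballot-type paths counted by Lemma~\ref{lem:w_dec_lucky_car_count}, not by Catalan numbers. (Your index $k$ is also not well defined as stated: the number of east steps strictly to the right of the peak's east step is always $n-j$, independent of the peak's height.) If you carry out the peak-position decomposition correctly, multiplying the two ballot counts and summing over the height $h=n-i+1$ of the peak, you recover precisely \eqref{eq:sumq} --- which the paper already has from Lemma~\ref{lem:car_i_lucky} --- and not \eqref{eq:sumq2}. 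The paper explicitly notes that no direct algebraic passage from \eqref{eq:sumq} to \eqref{eq:sumq2} is known, so the content of Theorem~\ref{thm:weak} is exactly the step your argument skips.

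The missing idea is a different cut. The paper's bijection does not cut at the peak: it cuts at the \emph{first} arrival $(j-1,\ell)$ of the path on the line $x=j-1$ and at the \emph{first return} $(s,t)$ of the path to the translated diagonal $y=x+\ell-j+1$ through that point. The excursion between these two points, with its initial north step and final east step removed, is an honest Dyck path $Q_k$ with $k=s-j$ (and it automatically contains the column-$j$ peak, since the excursion begins with a north step on $x=j-1$); concatenating the two remaining pieces yields an honest Dyck path $Q_{n-1-k}$, and the constraint $k\le n-j$ arises because $Q_{n-1-k}$ must reach $x=j-1$ to be re-split. Your closing remarks about uniqueness of the column-$j$ peak and about $n-1-k\ge j-1$ are correct observations, but they attach to a decomposition that does not produce the pair of Dyck paths the identity requires.
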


Before beginning the proof, we note that the numbers which arise in this manner correspond with the entries of \texttt{A067323} in the OEIS \cite{oeis} which come from the Catalan triangle.

\begin{proof}
We establish a bijection between $P_n^{(j)}$, a Dyck path of length $n$ where there is a peak on the $j$-th column, and $(Q_{n-1-k}, Q_k)$ an (ordered) pair of Dyck paths of length $n-1-k$ and $k$, respectively, with $0\le k\le n-j$.

First, note that for $P_n^{(j)}$ that there must be at least one north step on the line $x=j-1$. Now let $(j-1,\ell)$ be the \emph{first} point where the $P_n^{(j)}$ hits $x=j-1$ and let $(s,t)$ be the \emph{next} point where $P_n^{(j)}$ hits the line $y=x+\ell-j+1$. 

Let $k=s-j$. The Dyck path $Q_{n-1-k}$ is formed by concatenating the part of $P_n^{(j)}$ from $(0,0)$ to $(j-1,\ell)$ with the part of $P_n^{(j)}$ from $(s,t)$ to $(n,n)$. The Dyck path $Q_k$ is formed by the part of $P_n^{(j)}$ from $(j-1,\ell+1)$ to $(s-1,t)$. An example of this is illustrated in Figure~\ref{fig:bijection}. 

This process can be reversed by taking the the Dyck path $Q_{n-1-k}$ and splitting at $x=j-1$. We have the first part start at $(0,0)$, and the second part end at $(n,n)$. We now add a north step out of the first part, and an east step into the second part. Finally, insert the Dyck path $Q_k$ to combine these two paths. In order to carry this out  it must be that the first Dyck path has to touch the line $x=j-1$ which forces $0\le k\le n-j$.

\begin{figure}[htb]
\centering

\begin{tikzpicture}[scale=0.45]
\fill[color=blue!10!white] (4,6)--(4,9)--(7,9)--cycle;
\fill[color=red!10!white] (8,8)--(8,10)--(10,10)--cycle;
\fill[color=red!10!white] (0,0)--(0,6)--(4,6)--(4,4)--cycle;

\foreach \x in {0,...,10}
    \draw[color=white!60!black] (\x, 0)--(\x,10) (0,\x)--(10,\x);

\draw[dashed,color=white!75!black] (0, 0)--(10, 10);
\draw[dashed,color=white!90!black] (4,6)--(7,9);

\node at (4.5,-.5) {\small$j$};

\draw[line width=2.5pt,color=red] 
(0, 0)--(0, 1)--(1, 1)--(1, 2)--(2, 2)--(2, 5)--(4, 5) (10,10)--(9,10)--(9,9)--(8,9);
\draw[line width=2.5pt,color=green!70!black,densely dotted] 
(4, 5)--(4,6) (8,9)--(7,9);
\draw[line width=2.5pt,color=blue,densely dashed] 
(4,6)--(4,8)--(5,8)--(5,9)--(7,9);

\node[scale=1.5] at (12,5) {$\leftrightarrow$};
\node[yscale=2.75] at (14,5) {$\bigg($};

\fill[color=red!10!white] (15,2)--(15,8)--(21,8)--cycle;

\draw[dashed,color=white!75!black] (15,2)--(21,8);
\foreach \x in {0,...,6}
    \draw[color=white!60!black] (\x+15, 2)--(\x+15,8) (15,\x+2)--(21,\x+2);
\node at (19.5,1.5) {\small$j$};
\draw[line width=2.5pt,color=red] 
(15,2)--(15,3)--(16,3)--(16,4)--(17,4)--(17,7)--(20,7)--(20,8)--(21,8);

\node[scale=2] at (22,3) {$,$};

\fill[color=blue!10!white] (23,2)--(23,5)--(26,5)--cycle;
\draw[dashed,color=white!75!black] (23,2)--(26,5);
\foreach \x in {0,...,3}
    \draw[color=white!60!black] (\x+23, 2)--(\x+23,5) (23,\x+2)--(26,\x+2);
\draw[line width=2.5pt,color=blue,densely dashed] 
(23,2)--(23,4)--(24,4)--(24,5)--(26,5);
\node[yscale=2.75] at (27,5) {$\bigg)$};

\end{tikzpicture}
\caption{Example of a bijection between $P_n^{(j)}\leftrightarrow (Q_{n-1-k},Q_k)$ for Theorem~\ref{thm:weak}.}
\label{fig:bijection}
\end{figure}
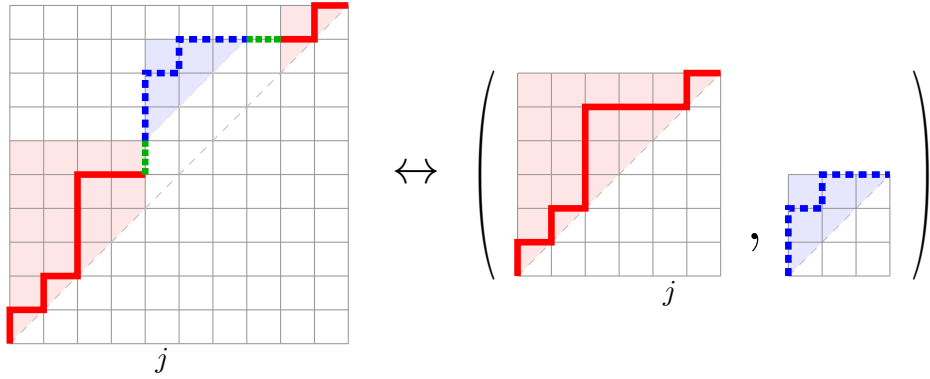

Since \eqref{eq:sumq2} counts the number of ways to select the two Dyck paths $(Q_{n-1-k},Q_k)$ for $0\le k\le n-j$, the result follows.
\end{proof}

Finally, let us compute the expected number of lucky spots for weakly-decreasing parking functions. As noted earlier, this is the same as the number of preferred spots for a weakly-decreasing parking function.

\begin{theorem}\label{thm:expect_coin_flip}
For $n\ge 1$, the expected number of lucky spots for a weakly-decreasing parking function is $\frac12(n+1)$.
\end{theorem}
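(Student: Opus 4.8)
The plan is to reduce the statement, via linearity of expectation, to a standard Narayana-number identity. As recorded via the Narayana numbers earlier in this section, a weakly-decreasing parking function of $1,\ldots,n$ with exactly $k$ lucky spots corresponds under the Dyck-path bijection to a path of size $n$ with exactly $k$ peaks, and the number of these is the Narayana number $N(n,k)=\tfrac1k\binom{n-1}{k-1}\binom{n}{k-1}$. Since there are $C_n$ weakly-decreasing parking functions in all, the expected number of lucky spots is $\frac1{C_n}\sum_{k=1}^n kN(n,k)$. So the whole problem reduces to evaluating $\sum_{k=1}^n kN(n,k)$.

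For the main computation, the factor $k$ cancels the $\tfrac1k$ in $N(n,k)$, so after the substitution $j=k-1$ we are left with $\sum_{j=0}^{n-1}\binom{n-1}{j}\binom{n}{j}$. Rewriting $\binom{n}{j}=\binom{n}{n-j}$ and applying the Vandermonde identity gives $\sum_{j}\binom{n-1}{j}\binom{n}{n-j}=\binom{2n-1}{n}=\binom{2n-1}{n-1}$. Finally, the elementary identities $\binom{2n-1}{n-1}=\tfrac12\binom{2n}{n}$ and $C_n=\tfrac1{n+1}\binom{2n}{n}$ turn this into $\tfrac12(n+1)C_n$; dividing by $C_n$ yields the mean $\tfrac12(n+1)$. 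I do not expect a real obstacle here — the only points to watch are the bookkeeping (cancelling $\tfrac1k$, reindexing $j=k-1$, applying Vandermonde in the correct direction) and making sure the opening reduction is properly justified by linearity of expectation over the indicator random variables ``the $j$-th spot is lucky'', $1\le j\le n$.

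As a consistency check, one can instead sum the per-spot counts supplied by Theorem~\ref{thm:weak}: the expectation equals $\frac1{C_n}\sum_{j=1}^n\sum_{k=0}^{n-j}C_{n-1-k}C_k$. Interchanging the order of summation (for fixed $k$ the index $j$ runs over $1\le j\le n-k$) and setting $m=n-1-k$ turns the double sum into $\sum_{m=0}^{n-1}(m+1)C_mC_{n-1-m}$; symmetrizing under $m\leftrightarrow n-1-m$ and invoking the Catalan recurrence $C_n=\sum_{m=0}^{n-1}C_mC_{n-1-m}$ shows this equals $\tfrac12(n+1)C_n$, so again the expectation is $\tfrac12(n+1)$.
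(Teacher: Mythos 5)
Your argument is correct, and both of your routes differ from the paper's. The paper starts from Theorem~\ref{thm:weak}, interchanges the order of summation to get $\sum_{k=0}^{n-1}(n-k)C_{n-1-k}C_k$, and then evaluates this convolution by multiplying the Catalan generating function $f(x)$ against $\frac{d}{dx}(xf(x))=x(1-4x)^{-1/2}$ to extract the total count $\frac12\binom{2n}{n}$. Your primary route instead leans on the Remark identifying weakly-decreasing parking functions with $k$ lucky spots with Dyck paths having $k$ peaks (counted by the Narayana numbers), so the expectation becomes $\frac1{C_n}\sum_k kN(n,k)$; the factor $k$ cancels, Vandermonde gives $\binom{2n-1}{n}=\frac12\binom{2n}{n}$, and you are done. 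This is shorter and bypasses Theorem~\ref{thm:weak} entirely, at the cost of invoking the Narayana identification (which the paper does justify via the peak correspondence, so the dependence is legitimate). One small quibble: you announce ``linearity of expectation'' in your opening, but your main computation actually uses the distribution of the number of lucky spots directly via $E(X)=\sum_k kP(X=k)$, not indicator variables; the phrasing should match. Your consistency check is also worth highlighting: it reaches the same double sum as the paper but replaces the generating-function evaluation with the symmetrization $m\leftrightarrow n-1-m$ and the Catalan recurrence, giving $2S=(n+1)\sum_m C_mC_{n-1-m}=(n+1)C_n$ --- an appreciably more elementary finish than the paper's.
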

\begin{proof}
We use Theorem~\ref{thm:weak} to count the total number of lucky spots over all weakly-decreasing parking functions of length $n$ to get
\[
\sum_{j=1}^{n} \bigg(\sum_{k=0}^{n-j} C_{n-1-k}C_{k}\bigg)
=\sum_{k=0}^{n-1} \bigg(\sum_{j=1}^{n-k} C_{n-1-k}C_{k}\bigg)
=\sum_{k=0}^{n-1}(n-k)C_{n-1-k}C_{k}.
\]

We recall (see \cite{stanley}) that the generating function for Catalan numbers is 
\[
f(x)=\sum_{n= 0}^\infty C_nx^n=\frac{1-\sqrt{1-4x}}{2x} 
\text{ ~~and so~~ }
\frac{d}{dx}\big(xf(x)\big)=\sum_{n=0}^{\infty} nC_{n-1}x^n=\frac{x}{\sqrt{1-4x}}.
\]
Putting this together, we have
\begin{multline*}
\sum_{n=0}^\infty\bigg(\sum_{k=0}^{n-1}(n-k)C_{n-1-k}C_{k}\bigg)x^n
= \bigg( \sum_{n=0}^{\infty} C_{n} x^n \bigg) \bigg( \sum_{n=0}^{\infty} n C_{n-1} x^n \bigg)\\=\bigg(\frac{1-\sqrt{1-4x}}{2x}\bigg)\bigg(\frac{x}{\sqrt{1-4x}}\bigg)=\frac{1}{2\sqrt{1 - 4x}} - \frac{1}{2}=\sum_{n =1}^{\infty} \frac{1}{2}{2n\choose n} x^n.
\end{multline*}
In particular, the total number of lucky spots is $\frac12{2n\choose n}$. On the other hand, the total number of weakly-decreasing parking functions is $\frac{1}{n+1}{2n\choose n}$. Dividing the two terms gives the result.
\end{proof}


\begin{thebibliography}{3}
\bibitem{deutsch} E. Deutsch, Dyck path enumeration, \emph{Discrete Mathematics}, Boca Raton, 1999, 167--202.

\bibitem{diaconis} P. Diaconis and A. Hicks, Probabilizing parking functions, \emph{Adv. in Appl. Math.} \textbf{89} (2017), 125--155.


\bibitem{prob} I. Durmi\'c, A. Han, P.E. Harris, R. Ribeiro and M. Yin, Probabilistic parking functions, preprint. arXiv:2211.00536

\bibitem{GS} I. Gessel and S. Seo, A refinement of Cayley's formula for trees, \emph{Electron. J. Combin.} \textbf{11} (2004), \#27, 23 pp.

\bibitem{NT} J.-C. Novelli and J.-Y. Thibon, Duplicial algebras, parking functions, and Lagrange inversion, \textit{Algebraic combinatorics, resurgence, moulds and applications (CARMA). Vol. 1}, (2020) 263--290.

\bibitem{pollak} J. Riordan, Ballots and trees, \textit{J. Combinatorial Theory\/} \textbf{6} (1969) 408--411.

\bibitem{monthly}
A. Slav\'{\i}k and M. Vestenick\'a, Lucky cars: Expected values and generating functions, \textit{Monthly}, \textbf{131} (2024), 343--348.

\bibitem{oeis} N. Sloane and The OEIS Foundation Inc., \textit{The on-line encyclopedia of integer sequences}, \url{oeis.org}, 2023.

\bibitem{stanley} R. Stanley, \emph{Catalan Numbers}, Cambridge University Press, New York, 2015. viii+215 pp.

\bibitem{cyan} C. Yan, Parking functions in \emph{Handbook of Enumerative Combinatorics}, CRC Press, Boca Raton, 2015, 835--893.
\end{thebibliography}
\end{document}